\newtheorem{theorem}{Theorem}[section]
\newtheorem{proposition}[theorem]{Proposition}
\theoremstyle{definition}
\newtheorem{remark}[theorem]{Remark}
\newcommand{\rset}{\mathbb{R}}
\newcommand{\cset}{\mathbb{C}}
\newcommand{\expected}{\mathbb{E}}
\newcommand{\sgdFunction}{\beta}
\DeclareMathOperator*{\argmax}{arg\,max}
\def\Ll{\mathcal{L}}
\def\Uu{\mathcal{U}}
\def\Vv{\mathcal{V}}
\def\R{\mathbb{R}}
\def\E{\mathbb{E}}
\def\C{\mathbb{C}}
\def\dx{dx}
\def\dy{dy}
\def\DATAM{{\tilde{\rho}}}          %
\def\FTG{f}          %
\def\FTA{\beta}               %
\def\RISK{\Ll}            %
\def\ERISK{\hat\Ll}       %
\def\rmD{{\mathrm{d}}}
\def\BARS{\mathbf{S}} %
\def\ID{\mathbf{I}}
\def\IM{\mathrm{i}}
\def\BFO{\boldsymbol{\omega}}
\def\BFBH{\boldsymbol{\hat\beta}}
\def\BETAH{\hat\beta}
\def\ACTIV{s}
\def\SP#1#2{{#1}\cdot{#2}}
\def\EXPECT{\mathbb{E}}
\def\COMMA{\,,}
\def\PERIOD{\,.}
\def\THETA{\BFBH,\BFO}
\def\SUPP{{\mathrm{supp}\,}}
\def\ALPHAEX{\gamma}             %
\def\TOL{\mathrm{TOL}}  %
\subjclass[2010]{Primary: 65D15; Secondary: 65D40, 65C05.}
 \keywords{random Fourier features, neural networks, 
 Metropolis algorithm, stochastic gradient descent.}
\thanks{
The research was supported by
Swedish Research Council grant 2019-03725. The work of P.P.\ was supported in part by the ARO Grant W911NF-19-1-0243.}
\begin{document}

\title{Adaptive random Fourier features with Metropolis sampling}

\author{Aku Kammonen}
\address{KTH Royal Institute of Technology, Stockholm, Sweden}

\author{Jonas Kiessling} 
\address{H-Ai AB, Stockholm, Sweden}

\author{Petr Plech\'{a}\v{c}}
\address{University of Delaware, Newark, DE, 19716} 

\author{Mattias Sandberg} 
\address{KTH Royal Institute of Technology, Stockholm, Sweden}

\author{Anders Szepessy}
\address{KTH Royal Institute of Technology, Stockholm, Sweden}

\maketitle

\begin{abstract}
The supervised learning problem to
determine a neural network approximation 
$\mathbb{R}^d\ni x\mapsto\sum_{k=1}^K\hat\beta_k e^{{\mathrm{i}}\omega_k\cdot x}$
with one hidden layer is studied as
a random Fourier features algorithm.  
The Fourier features, i.e., the frequencies $\omega_k\in\mathbb{R}^d$,
are sampled using an adaptive Metropolis sampler.
The Metropolis test accepts proposal frequencies $\omega_k'$, having corresponding amplitudes $\hat\beta_k'$, with the probability
$\min\big\{1, (|\hat\beta_k'|/|\hat\beta_k|)^\gamma\big\}$,
for a certain positive parameter $\gamma$, determined by minimizing the approximation error for given computational work.
This adaptive, non-parametric stochastic method leads asymptotically, as $K\to\infty$, to equidistributed amplitudes $|\hat\beta_k|$, analogous  to deterministic adaptive algorithms for differential equations. The equidistributed amplitudes are shown to asymptotically correspond to the optimal density for independent samples in random Fourier features methods.
Numerical evidence is provided in order to demonstrate the approximation properties and efficiency of the proposed algorithm. The algorithm is tested
both on synthetic data and a real-world high-dimensional benchmark.
\end{abstract}

\section{Introduction}
We consider a supervised learning problem from a data set $\{x_n,y_n\} \in \R^d\times \R$, $n=1,\dots,N$ with the data 
 independent identically distributed (i.i.d.) samples from an unknown probability distribution $\DATAM(\dx\dy)$. The distribution $\DATAM$ is not known a priori but it is accessible from samples of the data. 
We assume that there exists a function $\FTG:\R^d\to \R$ such that 
$y_n = \FTG(x_n) + \xi_n$ where the noise is represented by iid random variables 
$\xi_n$ with $\EXPECT[\xi_n]=0$ and $\EXPECT[\xi_n^2]=\sigma_\xi^2$.

We assume that the target function $\FTG(x)$ can be approximated by a single layer neural network which defines an approximation $\FTA:\R^d\times\R^{Kd}\times\C^K\to\R$ 
\begin{equation}
 \FTA(x;\BFO,\BFBH) = \sum_{k=1}^K \BETAH_k \ACTIV(\omega_k,x)\COMMA
 \end{equation}
where we use the notation for the parameters of the network $\BFBH=(\BETAH_1,\dots,\BETAH_K)\in\cset^K$, $\BFO=(\omega_1,\dots,\omega_K)\in\R^{Kd}$. We consider a particular activation function that is also known as Fourier features
\[
\ACTIV(\omega,x)= e^{{\IM}\SP{\omega}{x}}\,,\;\;\mbox{for $\omega\in \rset^d$, $x\in\rset^d$.}
\]
Here $\SP{\omega}{x} = \sum_{i=1}^d \omega^i x^i$ is the Euclidean scalar product in $\R^d$. The goal of the neural network training is to minimize, over the set of parameters $(\BFBH,\BFO)\in\C^K\times\R^{Kd}$, the risk functional
\begin{equation}\label{eq:risk}
    \RISK(\BFBH,\BFO) = \EXPECT_{\DATAM} [\ell(\FTA(x;\THETA),y)]\equiv \int \ell(\FTA(x;\THETA),y) \,\DATAM(\dx\dy)
    \PERIOD
\end{equation}
Since the distribution $\DATAM$ is not known in practice the minimization problem is solved for the empirical risk
\begin{equation}\label{eq:erisk}
    \ERISK_N(\THETA) = \frac{1}{N}\sum_{n=1}^N \ell(\FTA(x_n;\THETA),y_n)\PERIOD
\end{equation}
The network is said to be over-parametrized if the width $K$ is greater than the number $N$ of training points,
i.e., $K>N$. We shall assume a fixed width such that $N>K$ when we study the dependence on the size of training data sets.
We focus on the reconstruction with the regularized least squares type risk function 
\[
\ell(\beta(x_n;\THETA),y_n) =  |y_n - \FTA(x_n;\THETA)|^2  + \lambda \sum_{k=1}^K |\BETAH_k|^2\PERIOD
\]
The least-square functional 
is augmented by the regularization term with  a Tikhonov regularization parameter $\lambda\ge 0$. For the sake of brevity we often omit the arguments $\THETA$ and use the notation $\FTA(x)$ for $\FTA(x;\THETA)$. We also use 
$|\BFBH|^2 := \sum_{k=1}^K |\BETAH_k|^2$ for the Euclidean norm on $\C^K$.

To approximately reconstruct $\FTG$ from the data based on the least squares method is a common task in statistics and machine learning, cf. \cite{understand}, which in a basic setting
takes the form of the minimization problem

\begin{equation}\label{eq:minsquare}
\min_{\beta\in \mathcal N_K} \left\{ \E_\DATAM[|y_n-\beta(x_n)|^2]  + \lambda \sum_{k=1}^K |\hat\beta_k|^2\right\}\,,
\end{equation}
where %
\begin{equation}\label{NN}
\mathcal N_K:=\Big\{\beta( x)=\sum_{k=1}^K\hat\beta_ks(\omega_k, x) \Big\}\,,
\end{equation}
represents  an artificial neural network with one hidden layer.
Suppose we assume that the frequencies $\BFO$  are random and we denote 
$\E_{{\BFO}}[g({\BFO},x,y)]:=\E[g({\BFO},x,y)\, |\, x,y]$ the conditional expectation with respect to the distribution of ${\BFO}$ conditioned on the data $(x,y)$.
Since a minimum is always less than or equal to its mean, there holds
\begin{equation}\label{min_ett}
\min_{(\BFBH,{\BFO})\in\cset^K\times \rset^{Kd}} \left\{ \E_\DATAM[|y_n-\beta(x_n)|^2]+\lambda|\BFBH|^2\right\}
\le \E_{{\BFO}} \Big[ \min_{\BFBH\in\cset^{K}} \left\{ \E_\DATAM[|y_n-\beta(x_n)|^2]+\lambda|\BFBH|^2\right\} \Big]\,.
\end{equation}
The minimization in the right hand side of \eqref{min_ett} is also known as the {\it random Fourier features problem}, see \cite{rahimi_recht,weinan_2,rudi}.
In order to obtain a better bound in \eqref{min_ett}  we assume that $\omega_k$, $k=1,\dots,K$ are i.i.d. random variables with the common probability distribution $%
p(\omega) \rmD\omega$ and introduce a further minimization
\begin{equation}\label{E_min}
\min_{p}\E_{{\BFO}} \big[\min_{\BFBH\in\cset^{K}}\big\{ \E_\DATAM[|y_n-\beta(x_n;\THETA)|^2]+\lambda|\BFBH|^2\big\}\big]
\end{equation}
An advantage of this splitting into two minimizations is that the inner optimization is a convex problem, so that several robust solution methods are available. 
The question is: how can the density $p$ in the outer minimization be determined?

The goal of this work is to formulate a systematic method to approximately sample from an optimal distribution $p_*$. The first step is to determine the optimal distribution. Following Barron's work \cite{barron} and \cite{jones}, we first derive in Section \ref{sec_p} the known error estimate
\begin{equation}\label{barron_est}
\E_{{\BFO}}\big[ \min_{\BFBH\in\cset^K}\big\{\E_\DATAM[| \sgdFunction(x)-y|^2]+\lambda|\BFBH|^2\big\}\big]
\le  \frac{1+\lambda}{K}\E_{\omega}[\frac{|\hat f(\omega)|^2}{(2\pi)^{d}p(\omega)^2}]+ \E_\DATAM[|y-f(x)|^2]\,,
\end{equation}
based on independent samples $\omega_k$ from the distribution $p$. Then, as in importance sampling, it is shown that the right hand side is minimized by choosing $p(\omega)=p_*(\omega):= |\hat f(\omega)|/\|\hat f\|_{L^1(\rset^d)}$, where $\hat f$ is the Fourier transform of $f$.
Our next step is to formulate an adaptive method that approximately generates independent samples  from the density $p_*$, thereby following the general convergence \eqref{barron_est}.
We propose to use the Metropolis  sampler: 
\begin{itemize}
\item given frequencies ${\BFO}=(\omega_1,\ldots,\omega_k)\in\rset^{Kd}$ with corresponding amplitudes
$\BFBH=(\hat\beta_1,\ldots,\hat\beta_k)\in \cset^{K}$
a proposal ${\BFO}'\in \rset^{Kd}$
is suggested and corresponding  amplitudes
$\BFBH'\in \cset^{K}$ determined by the  minimum in \eqref{E_min}, then
\item the {\it Metropolis test} is for each $k$ to accept $\omega'_k$ with probability
$\min(1, |\hat\beta'_k|^\ALPHAEX/|\hat\beta_k|^\ALPHAEX)$.
\end{itemize}
The choice of the Metropolis criterion $\min(1, |\hat\beta'_k|^\ALPHAEX/|\hat\beta_k|^\ALPHAEX)$ and selection of $\ALPHAEX$ is explained in Remark~\ref{optimal_alpha}.
This adaptive algorithm (Algorithm~\ref{alg:ARFM}) is motivated mainly by two properties
based on the regularized empirical measure $\bar\beta(\omega):=\sum_{k=1}^K\hat\beta_k \phi_\varepsilon(\omega-\omega_k)$
related to the amplitudes $\BFBH$, 
where 
$\phi_\varepsilon(\omega)=
(2\pi\varepsilon^2)^{-d/2}e^{-|\omega|^2/(2\varepsilon^2)}$:
\begin{itemize} 
     \item[(a)] The quantities $K\,p\bar\beta$ converge to $\hat f$ in $L^1$
                     asymptotically, as $K\to\infty$ and $\varepsilon\to 0+$, as shown in
                     Proposition \ref{thm_improve_p}.
                     For the proof of Proposition \ref{thm_improve_p} we consider a simplified setting where the support of the $x$-data is all of $\mathbb{R}^d$.
\item[(b)]  Property (a) implies that the optimal density $p_*$ will asymptotically
                {\it equidistribute}  $|\bar\beta|$, i.e., $|\bar\beta|$ becomes constant since $|\hat f|/p_*=\|\hat f\|_{L^1(\rset^d)}$ is constant. 
\end{itemize}

The proposed adaptive method aims to equidistribute the 
amplitudes $|\hat\beta_k|$: if $|\hat\beta_k|$ is large more frequencies will be sampled Metropolis-wise in the neighborhood of $\omega_k$ and if $|\hat\beta_k|$ is small
then fewer frequencies will be sampled in the neighborhood. 
Algorithm \ref{alg:ARFM} includes the dramatic simplification to compute
all amplitudes in one step for the proposed frequencies, so that the computationally costly step to solve the convex minimization problem for the amplitudes is not done
for each individual Metropolis test. A reason that this simplification works is the asymptotic
independence  
$Kp|\bar\beta|\to |\hat f|$
shown in Proposition~\ref{thm_improve_p}.
We note that the regularized amplitude measure 
$\bar\beta$ is impractical
to compute in high dimension $d\gg 1$. Therefore Algorithm \ref{alg:ARFM} uses the amplitudes $\hat\beta_k$ instead
and consequently Proposition~\ref{thm_improve_p}
serves only as a motivation that the algorithm can work.

In some sense, the adaptive random features Metropolis method is a stochastic generalization of deterministic adaptive computational methods for differential equations where the optimal efficiency is obtained for equidistributed error indicators, pioneered in \cite{babuska}. In the deterministic case, additional degrees of freedom are added where the error indicators are large, e.g., by subdividing finite elements or time steps. The random features Metropolis method analogously adds frequency samples where the indicators $|\hat\beta_k|$ are large.

A common setting is to, for fixed number of data point $N$, find the  number of Fourier features $K$  with similar approximation errors as for kernel ridge regression.  
Previous such results on the kernel learning improving the sampling for random Fourier features are presented, e.g., in \cite{bach}, \cite{Wilson2013}, \cite{Li2019TowardsAU} and \cite{pmlr-v70-avron17a}. 
Our focus is somewhat different, namely 
for fixed number of Fourier features $K$ find an optimal method by adaptively adjusting the frequency sampling density for each data set.
In \cite{Wilson2013}  the Fourier features are adaptively sampled based on a density parametrized as a linear combination of Gaussians. %
The work \cite{bach} and \cite{Li2019TowardsAU} determine the optimal density as a leverage score for sampling random features, based on a singular value decomposition of an  integral operator related to the reproducing kernel Hilbert space, and formulates a method to optimally resample given samples. 
Our adaptive random feature method on the contrary is not based on a parametric description or resampling and
we are not aware of other non parametric adaptive methods generating samples for random Fourier features for general kernels.
The work \cite{pmlr-v70-avron17a} studies how to optimally choose the number of Fourier features $K$ for a given number of data points $N$ and provide upper and lower error bounds. In addition \cite{pmlr-v70-avron17a}
presents a method to effectively  sample from the leverage score in the case of Gaussian kernels.

We demonstrate computational benefits of the proposed adaptive algorithm by including a simple example that provides 
explicitly the computational complexity of the adaptive sampling Algorithm~\ref{alg:ARFM}. Numerical benchmarks in Section~\ref{sec:Benchmarks} then further document gains in
efficiency and accuracy in comparison 
with the standard random Fourier features
that use a fixed distribution of frequencies.

\medskip 

Although our analysis is carried for the specific activation
function %
$s(\omega,x)=e^{{\IM}\omega\cdot x}$,
thus directly related 
to random Fourier features approximations, we note that in the numerical experiments (see Experiment 5 in Section~\ref{sec:Benchmarks}) we also tested the activation function
\[
s(\omega,x)=\frac{1}{1 + e^{-\omega\cdot x}}\,,
\]
often used in the definition of neural networks and called the
{\it sigmoid} activation. With such a change of the activation function the concept of sampling frequencies turns into sampling weights. Numerical results in Section~\ref{sec:Benchmarks} suggest that Algorithm~\ref{alg:ARFM} performs well also
in this case. A detailed study of a more general class of activation functions is subject of ongoing work.
  \medskip 

Theoretical motivations of the algorithm are given in Sections~\ref{sec_p} and \ref{sec_amplitude}.
In Section~\ref{sec_amplitude} we formulate and prove the weak convergence of the scaled amplitudes $K\,\BFBH$.
In Section~\ref{sec_p} we derive the optimal density $p_*$ for sampling the frequencies, under the assumption that $\omega_k, k=1,\ldots, K$ are independent and $\hat f\in L^1(\rset^d)$. 
Section~\ref{sec:adaptive} describe the algorithms.
Practical consequences of the theoretical results and numerical tests with different data sets are described in 
Section~\ref{sec:Benchmarks}.
\section{Optimal frequency distribution}\label{sec_p}
\subsection{Approximation rates using a Monte Carlo method}
The purpose of this section is to derive a bound for 
\begin{equation}\label{E_omega_min_x}
    \expected_{\BFO}\big[ \min_{\BFBH\in\cset^K}\big\{\E_\DATAM[| \sgdFunction(x)-y|^2]+\lambda|\BFBH|^2\big\}\big]
\end{equation}
and apply it to estimating the approximation rate for random Fourier features.

The {\it Fourier transform}
\[
\hat f(\omega):=(2\pi)^{-d/2}\int_{\rset^d} f(x)e^{-{\IM}\omega\cdot x}\rmD x
\]
has the inverse representation
\[
f(x)=(2\pi)^{-d/2}\int_{\rset^d}\hat f(\omega)e^{{\IM}\omega\cdot x}\rmD \omega
\]
provided $f$ and $\hat f$ are $L^1(\rset^d)$ functions.
We assume  $\{\omega_1,\ldots, \omega_k\}$ are independent samples from a probability 
density $p:\rset^d\to [0,\infty)$. Then the
Monte Carlo approximation of this representation yields
the neural network approximation $f(x)\simeq \alpha(x,\BFO)$ with the estimator defined
by the empirical average
\begin{equation}\label{estimator}
\alpha(x,\BFO) = \frac{1}{K}\sum_{k=1}^K \frac{1}{(2\pi)^{d/2}}\frac{\hat f(\omega_k)}{p(\omega_k)}e^{{\IM}\omega_k\cdot x}\,.
\end{equation}
To asses the quality of this approximation we study the variance 
of the estimator $\alpha(x,\BFO)$. By construction and i.i.d. sampling of $\omega_k$ the estimator is  unbiased, that is
\begin{equation}
\E_{\BFO}[\alpha(x,\BFO)]=f(x)\,,\\
\end{equation}
and we define
\[
\hat\alpha_k :=\frac{1}{(2\pi)^{d/2}} \frac{\hat f(\omega_k)}{K\,p(\omega_k)}\,.
\]
Using this Monte Carlo approximation we obtain a bound on the error which reveals a rate of convergence with respect to the number of features $K$.
\begin{theorem}\label{lemma:bound}
Suppose the frequencies $\{\omega_1,\dots,\omega_K\}$ are i.i.d. random variables with the common distribution $p(\omega)\rmD\omega$, then 
\begin{equation}\label{variance_f}
    \mathrm{Var}_{\BFO}[\alpha(x,\BFO)] = \frac{1}{K}\E_\omega\left[\frac{|\hat f(\omega)|^2}{(2\pi)^{d}p(\omega)^2}-f^2(x)\right]\,,
\end{equation}
and
\begin{equation}\label{main_rate_merr}
\E_{{\BFO}}\big[ \min_{\BFBH\in\cset^K}\big\{\E_\DATAM[| \sgdFunction(x)-y|^2]+\lambda|\BFBH|^2\big\}\big]
\le  \frac{1+\lambda}{K}\E_{\omega}[\frac{|\hat f(\omega)|^2}{(2\pi)^{d}p(\omega)^2}]+ \E_\DATAM[|y-f(x)|^2]\,.
\end{equation}
If there is no measurement error, i.e., $\sigma_\xi^2=0$ and $y_n=f(x_n)$, then
\begin{equation}\label{main_rate}
     \E_{\BFO}\big[\min_{\BFBH\in\cset^K} \big\{\E_\DATAM[|\beta(x)-f(x)|^2]+\lambda|\BFBH|^2\big\}\big]
      \le \frac{1+\lambda}{K}\E_\omega[\frac{|\hat f(\omega)|^2}{(2\pi)^{d}p(\omega)^2}]\,.
\end{equation}
\end{theorem}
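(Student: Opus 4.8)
The plan is to establish the variance identity \eqref{variance_f} first, and then obtain \eqref{main_rate_merr} and \eqref{main_rate} by bounding the inner minimum over $\BFBH$ from above by its value at the single explicit test point $\BFBH=(\hat\alpha_1,\dots,\hat\alpha_K)$, for which $\beta(x)=\alpha(x,\BFO)$. For \eqref{variance_f} I would write $\alpha(x,\BFO)=\tfrac1K\sum_{k=1}^K Z_k$ with $Z_k:=(2\pi)^{-d/2}\hat f(\omega_k)\,p(\omega_k)^{-1}e^{{\IM}\omega_k\cdot x}$; since $\omega_1,\dots,\omega_K$ are i.i.d., $\mathrm{Var}_{\BFO}[\alpha(x,\BFO)]=\tfrac1K\big(\E_\omega[|Z_1|^2]-|\E_\omega[Z_1]|^2\big)$. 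Fourier inversion gives $\E_\omega[Z_1]=(2\pi)^{-d/2}\int_{\rset^d}\hat f(\omega)e^{{\IM}\omega\cdot x}\,\rmD\omega=f(x)$, while $|Z_1|^2=(2\pi)^{-d}|\hat f(\omega_1)|^2 p(\omega_1)^{-2}$ since $|e^{{\IM}\omega\cdot x}|=1$; substituting and using that $f$ is real-valued yields \eqref{variance_f}. This uses implicitly that $f,\hat f\in L^1(\rset^d)$ and $p>0$ on $\SUPP\hat f$, so that the $\hat\alpha_k$ are well defined.

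To deduce the noise-free bound \eqref{main_rate}, I would note that for every realization of $\BFO$ the choice $\BFBH=(\hat\alpha_1,\dots,\hat\alpha_K)$ gives
\[
\min_{\BFBH\in\cset^K}\big\{\E_\DATAM[|\beta(x)-f(x)|^2]+\lambda|\BFBH|^2\big\}\le \E_\DATAM\big[|\alpha(x,\BFO)-f(x)|^2\big]+\lambda\sum_{k=1}^K|\hat\alpha_k|^2 ,
\]
and then apply $\E_{\BFO}$ to both sides. For the first term I would exchange $\E_{\BFO}$ and $\E_\DATAM$ by Tonelli (the integrand is nonnegative); for fixed $x$, unbiasedness gives $\E_{\BFO}[|\alpha(x,\BFO)-f(x)|^2]=\mathrm{Var}_{\BFO}[\alpha(x,\BFO)]$, so by \eqref{variance_f} and discarding the nonpositive term $-\tfrac1K\E_\DATAM[f^2(x)]$ this is bounded by $\tfrac1K\E_\omega\big[|\hat f(\omega)|^2(2\pi)^{-d}p(\omega)^{-2}\big]$. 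For the regularization term, a direct change of variables gives $\E_{\BFO}\big[\sum_{k=1}^K|\hat\alpha_k|^2\big]=\tfrac1K\E_\omega\big[|\hat f(\omega)|^2(2\pi)^{-d}p(\omega)^{-2}\big]$ as well, and adding the two contributions produces the prefactor $1+\lambda$ of \eqref{main_rate}.

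The general estimate \eqref{main_rate_merr} would follow from the same computation combined with a bias--variance split in the noise: writing $y=f(x)+\xi$ with $\E[\xi\mid x]=0$ and $\E[\xi^2]=\sigma_\xi^2$, and using that $\BFO$ (hence $\alpha$) is independent of the data, I would expand $|\alpha-y|^2=|\alpha-f(x)|^2-2\,\mathrm{Re}(\alpha-f(x))\,\xi+\xi^2$ so that taking $\E_\DATAM$ annihilates the cross term, giving $\E_\DATAM[|\alpha-y|^2]=\E_\DATAM[|\alpha-f(x)|^2]+\E_\DATAM[|y-f(x)|^2]$; the first summand is bounded as above and the second is the additive term in \eqref{main_rate_merr}, while $\sigma_\xi^2=0$ recovers \eqref{main_rate}. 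I do not expect a real obstacle here --- the argument is a Monte Carlo variance computation together with Tonelli and the trivial bound of a minimum by one test point --- the only points needing care being the Fourier-inversion identity behind unbiasedness (hence the standing $L^1$ and positivity assumptions on $p$), the interchange of the expectations over $\BFO$ and over the data, and keeping track of $\alpha(x,\BFO)$ being complex-valued while $y$ and $f$ are real when expanding the squared modulus.
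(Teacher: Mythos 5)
Your proposal is correct and follows essentially the same route as the paper: the i.i.d.\ Monte Carlo variance computation for \eqref{variance_f}, bounding the minimum over $\BFBH$ by its value at the test point $\boldsymbol{\hat\alpha}$ with $\E_{\BFO}\big[\sum_k|\hat\alpha_k|^2\big]=\tfrac1K\E_\omega\big[|\hat f(\omega)|^2(2\pi)^{-d}p(\omega)^{-2}\big]$, and dropping the $-f^2(x)$ term. Your explicit bias--variance split $\E_\DATAM[|\alpha-y|^2]=\E_\DATAM[|\alpha-f(x)|^2]+\E_\DATAM[|y-f(x)|^2]$ is exactly the ``straightforward calculation'' the paper leaves implicit for \eqref{main_rate_merr}.
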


\begin{proof}
Direct calculation shows that the variance of the Monte Carlo approximation satisfies
\begin{equation}
\begin{split}
\E_{\BFO}[|\alpha(x,\BFO)-f(x)|^2]
      &=K^{-2}\E_{\BFO}\left[
         \sum_{k=1}^K\sum_{\ell=1}^K\Big(\frac{\hat f(\omega_k )e^{{\IM}\omega_k\cdot x}}{(2\pi)^{d/2}p(\omega_k)} - f(x)\Big)^*
         \Big(\frac{\hat f(\omega_\ell )e^{{\IM}\omega_\ell\cdot x}}{(2\pi)^{d/2}p(\omega_\ell)} - f(x)\Big)\right]\\
&=K^{-1} \E_\omega\left[|\frac{\hat f(\omega )e^{{\IM}\omega\cdot x}}{(2\pi)^{d/2}p(\omega)} - f(x)|^2\right]
=K^{-1}\E_\omega\left[\frac{|\hat f(\omega)|^2}{(2\pi)^{d}p(\omega)^2}-f^2(x)\right]\,,
\end{split}
\end{equation}
and since a minimum is less than or equal to its average we obtain the random feature error estimate in the case without a measurement error, i.e., $\sigma^2_\xi=0$ and $y_n=f(x_n)$,
\begin{equation*}
\begin{split}
     \E_{\BFO}\big[\min_{\BFBH\in\cset^K} \big\{\E_\DATAM[|\beta(x)-f(x)|^2]+\lambda|\BFBH|^2\big\}\big]
          &\le \E_{\BFO}\big[ \E_\DATAM[|\alpha(x)-f(x)|^2]+\lambda|\boldsymbol{\hat\alpha}|^2\big]\\
          & \le \frac{1}{K} \E[\frac{|\hat f(\omega)|^2}{(2\pi)^{d}p(\omega)^2}-f^2(x)]+\, \frac{\lambda}{K}
                  \E_\omega[\frac{|\hat f(\omega)|^2}{(2\pi)^{d}p(\omega)^2}]\\
          &\le \frac{1+\lambda}{K}\E_\omega[\frac{|\hat f(\omega)|^2}{(2\pi)^{d}p(\omega)^2}]\,.
\end{split}
\end{equation*}
Including the measurement error yields after a straightforward calculation an additional term
\[
\E_{{\BFO}}\big[ \min_{\BFBH\in\cset^K}\big\{\E_\DATAM[| \sgdFunction(x)-y|^2]+\lambda|\BFBH|^2\big\}\big]
\le  \frac{1+\lambda}{K}\E_{\omega}[\frac{|\hat f(\omega)|^2}{(2\pi)^{d}p(\omega)^2}]+ \E_\DATAM[|y-f(x)|^2]\,.
\]
\end{proof}

\subsection{Comments on the convergence rate and its complexity}\label{complexity}%
The bounds \eqref{main_rate} and \eqref{main_rate_merr} reveal the rate of convergence with respect to $K$. 
To demonstrate the computational complexity and importance of using the adaptive sampling of frequencies we fix the 
approximated function to be a simple Gaussian
$$ 
f(x)=e^{-|x|^2{\sigma}^2/2}\,,\;\;\;\mbox{with}\;\;\;
\hat f(\omega)=\frac{1}{(2\pi{\sigma}^2)^{d/2}} e^{-|\omega|^2/(2{\sigma}^2)}\,,
$$
and we consider the two cases ${\sigma}>\sqrt{2}$ and $0<{\sigma}\ll 1$. Furthermore, we choose a particular distribution $p$ by assuming the frequencies $\omega_k$, $k=1,\dots,K$ from the standard normal distribution 
$\omega_k\sim\mathcal{N}(0,1)$ (i.e., the Gaussian density $p$ with the mean zero and variance one).

\smallskip

\noindent{\it Example I (large $\sigma$)}
In the first example we assume that ${\sigma}>\sqrt{2}$, thus the integral $\int_{\rset^d}\frac{|\hat f(\omega)|^2}{p(\omega)}\rmD \omega$ is unbounded.  The error estimate \eqref{main_rate} therefore indicates no convergence. 
Algorithm~\ref{alg:ARFM} on the other hand has the optimal convergence rate for this example. 

\smallskip

\noindent{\it Example II (small $\sigma$)}
In the second example we choose $0<{\sigma}\ll 1$ thus
the convergence rate $\frac{1}{K}\int_{\rset^d}\frac{|\hat f(\omega)|^2}{p(\omega)}\rmD \omega$ in \eqref{main_rate} becomes $K^{-1}\mathcal O({\sigma}^{-d})$ while the rate is $K^{-1}\mathcal O(1)$ for the optimal distribution 
$p=p_*=|\hat f|$, as ${\sigma}\to 0+$.
The purpose of the adaptive random feature algorithm is to avoid the large factor $\mathcal O({\sigma}^{-d})$.

\medskip

To  have the loss function bounded by a given tolerance 
$\TOL$ requires therefore that the non-adaptive random feature method  uses $K\ge \TOL^{-1} \int_{\rset^d}\frac{|\hat f(\omega)|^2}{p(\omega)}\rmD \omega\simeq
\TOL^{-1}\mathcal O({\sigma}^{-d})$, and
the computational work to solve the linear least squares problem is with $N\sim K$ proportional to $K^3\simeq \TOL^{-3}\mathcal O({\sigma}^{-3d})$.

In contrast, the proposed adaptive random features Metropolis method solves the least squares problem several times with a smaller $K$ to obtain the bound $\TOL$ for the loss. The number of Metropolis steps is asymptotically determined by the diffusion approximation in \cite{roberts_1997} and becomes proportional to $\ALPHAEX d\, {\sigma}^{-2}$. 
Therefore the computational work is smaller 
$ \TOL^{-3}\mathcal O(\ALPHAEX d\, {\sigma}^{-2})$ 
for the adaptive method.

\subsection{Optimal Monte Carlo sampling.}\label{sec_3.3}
This section determines the optimal density $p$ for independent Monte Carlo samples in \eqref{variance_f}  by minimizing, with respect to $p$,
the right hand side in the variance estimate \eqref{variance_f}.
\begin{theorem}
The probability density 
\begin{equation} \label{eq:optimal_density}
p_*(\omega)= \frac{ |\widehat f(\omega)|}{\int_{\rset^d}|\widehat f(\omega')| \rmD \omega'}
\,.
\end{equation}
is the solution of the minimization problem
\begin{equation}\label{MC_loss222}
\min_{p,\int_{\R^d} p(\omega)\rmD\omega = 1}\left\{
 \frac{1}{(2\pi)^{d}}
\int_{\rset^d}\frac{|\widehat f(\omega)|^2}{p(\omega)}
\rmD \omega\right\}\,.
\end{equation}
\end{theorem}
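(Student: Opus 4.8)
The plan is to reduce \eqref{MC_loss222} to a single application of the Cauchy--Schwarz inequality, exploiting that the problem is convex. First I would observe that on the admissible set $\{p\ge 0:\ \int_{\rset^d} p(\omega)\,\rmD\omega = 1\}$ the functional $p\mapsto \int_{\rset^d} |\widehat f(\omega)|^2/p(\omega)\,\rmD\omega$ is convex, since it is an integral of the maps $(a,t)\mapsto a^2/t$, each jointly convex on $\mathbb{R}\times(0,\infty)$. Hence any admissible $p$ at which the objective attains the value predicted by a universal lower bound is automatically a global minimizer, and it suffices to (i) produce such a lower bound and (ii) check it is saturated by $p_*$.

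For step (i), let $p$ be any admissible density that is positive a.e.\ on $\SUPP\widehat f$ (if $p$ vanishes on a positive-measure subset of $\SUPP\widehat f$ the objective is $+\infty$ and there is nothing to prove). Then Cauchy--Schwarz gives
\[
\int_{\rset^d} |\widehat f(\omega)|\,\rmD\omega
= \int_{\rset^d} \frac{|\widehat f(\omega)|}{\sqrt{p(\omega)}}\,\sqrt{p(\omega)}\,\rmD\omega
\le \Big(\int_{\rset^d}\frac{|\widehat f(\omega)|^2}{p(\omega)}\,\rmD\omega\Big)^{1/2}
\Big(\int_{\rset^d} p(\omega)\,\rmD\omega\Big)^{1/2}\,,
\]
and using $\int_{\rset^d} p\,\rmD\omega = 1$ this rearranges to the universal bound $\int_{\rset^d} |\widehat f|^2/p\,\rmD\omega \ge \big(\int_{\rset^d}|\widehat f|\,\rmD\omega\big)^2$, i.e.\ the objective in \eqref{MC_loss222} is at least $(2\pi)^{-d}\|\widehat f\|_{L^1(\rset^d)}^2$.

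For step (ii), I would recall that equality in Cauchy--Schwarz holds iff $|\widehat f|/\sqrt{p}$ and $\sqrt{p}$ are proportional a.e., i.e.\ $p$ is proportional to $|\widehat f|$; the constraint $\int_{\rset^d} p\,\rmD\omega = 1$ then forces the proportionality constant to be $1/\|\widehat f\|_{L^1(\rset^d)}$, which is precisely the density \eqref{eq:optimal_density}. Substituting $p=p_*$ one checks directly that $\int_{\rset^d}|\widehat f(\omega)|^2/p_*(\omega)\,\rmD\omega = \|\widehat f\|_{L^1(\rset^d)}\int_{\rset^d}|\widehat f(\omega)|\,\rmD\omega = \|\widehat f\|_{L^1(\rset^d)}^2$, so the lower bound is attained and, by convexity, $p_*$ is the (essentially unique) minimizer, with optimal value $(2\pi)^{-d}\|\widehat f\|_{L^1(\rset^d)}^2$. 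There is no genuinely hard step here; the only point requiring care is bookkeeping at the margins: the hypothesis $\widehat f\in L^1(\rset^d)$ is what makes $p_*$ a bona fide probability density, and one must restrict attention to densities supported on $\SUPP\widehat f$ — placing mass where $\widehat f$ vanishes only wastes it and can be removed without increasing the objective, while on $\{\widehat f\ne 0\}$ the above argument applies verbatim. (An equivalent route, if one prefers, is a Lagrange-multiplier computation: the stationarity condition for $J(p)=\int |\widehat f|^2/p + \mu(\int p - 1)$ reads $-|\widehat f(\omega)|^2/p(\omega)^2 + \mu = 0$, giving $p(\omega)\propto|\widehat f(\omega)|$, after which convexity again upgrades the critical point to the global minimum.)
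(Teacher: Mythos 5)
Your proof is correct, but it takes a different route from the paper. The paper proves the theorem by a first-variation (calculus-of-variations) computation: it removes the normalization constraint via the substitution $p=q/\int_{\rset^d}q\,\rmD\omega$, perturbs $q\mapsto q+\varepsilon v$, and sets the derivative $H'(0)=0$ to conclude $q\propto|\widehat f|$ — essentially the Lagrange-multiplier argument you mention only parenthetically at the end. Your main argument instead establishes the universal lower bound $\int_{\rset^d}|\widehat f|^2/p\,\rmD\omega\ge\big(\int_{\rset^d}|\widehat f|\,\rmD\omega\big)^2$ by Cauchy--Schwarz and checks that $p_*$ saturates it. What your route buys is genuine global optimality (and essential uniqueness, from the equality case of Cauchy--Schwarz) without any differentiability or interior-point considerations, whereas the paper's computation as written only identifies a stationary point and leaves the upgrade to a global minimum implicit; your convexity remark, or your direct lower bound, closes that gap. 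What the paper's variational route buys is a template that generalizes mechanically to other objectives where no clean two-factor Cauchy--Schwarz splitting is available. Your handling of the boundary cases (densities vanishing on part of $\SUPP\widehat f$, or putting mass off $\SUPP\widehat f$) is careful and correct, and the evaluation $\int_{\rset^d}|\widehat f|^2/p_*\,\rmD\omega=\|\widehat f\|_{L^1(\rset^d)}^2$ matches the optimal value implicit in the paper.
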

\begin{proof}
The change of variables $p(\omega)=q(\omega)/\int_{\rset^d}q(\omega)\rmD \omega$
implies $\int_{\rset^d}p(\omega)\rmD \omega=1$ for any $q:\rset^d\to[0,\infty)$. Define for any $v:\rset^d\to \rset$ and $\varepsilon$ close to zero
\[
H(\varepsilon):=%
\int_{\rset^d}\frac{|\widehat f(\omega)|^2}{q(\omega)+\varepsilon v(\omega)}
\rmD \omega \int_{\rset^d}q(\omega)+\varepsilon v(\omega) \rmD \omega\,.
\]
At the optimum we have
\[
\begin{split}
H'(0)
= \int_{\rset^d}\frac{|\widehat f(\omega)|^2v(\omega)}{-q^2(\omega)}\rmD \omega 
\underbrace{\int_{\rset^d}q(\omega') \rmD \omega' }_{=:c_1}
 + \underbrace{\int_{\rset^d}\frac{|\widehat f(\omega')|^2}{q(\omega')} \rmD \omega'}_{=:c_2}
\int_{\rset^d}v(\omega) \rmD \omega
=
\int_{\rset^d}\big(c_2-c_1\frac{|\widehat f(\omega)|^2}{q^2(\omega)}\big)v(\omega) \rmD \omega 
\end{split}
\]
and the optimality condition 
$H'(0)=0$ implies
$q(\omega)=(\frac{c_1}{c_2})^{1/2} |\widehat f(\omega)|
$.
Consequently the optimal density becomes
\begin{equation*} %
p_*(\omega)= \frac{ |\widehat f(\omega)|}{\int_{\rset^d}|\widehat f(\omega')| \rmD \omega'}
\,.
\end{equation*}
\end{proof}
We note that the optimal density does not depend on the number of Fourier features, $K$, and the number of data points, $N$, in contrast to the optimal density for the least squares problem \eqref{eq:num_disc_prob}
derived in \cite{bach}.

As mentioned at the beginning of this section sampling $\omega_k$ from the distribution $p_*(\omega)\rmD\omega$ leads to the tight upper bound on the approximation error in \eqref{main_rate}.

\section{Asymptotic behavior of amplitudes $\hat\beta_k$}\label{sec_amplitude}
The optimal density $p_*=|\hat f|/\|\hat f\|_{L^1(\rset^d)}$ can be related to data as follows:
by considering the problem \eqref{E_omega_min_x}
and letting $\zeta(x)=\sum_{k=1}^K \hat\zeta_k e^{{\IM}\omega_k\cdot x}$ be a least squares minimizer of
\begin{equation}\label{zeta_min}
\min_{\zeta\in\mathcal{N}_K}\left\{\E_\DATAM[|\zeta(x)-y|^2\ |\ \omega] +\lambda|\hat{\boldsymbol{\zeta}}|^2\right\}
\end{equation} 
the vanishing gradient at a minimum yields the normal equations, for $\ell=1,\ldots, K$,
\[
\sum_{k=1}^K\E_\DATAM[e^{{\IM} (\omega_k-\omega_\ell)\cdot x}\hat\zeta_k] +\lambda\hat\zeta_\ell
=\E_\DATAM[y\, e^{-{\IM}\omega_\ell\cdot x}]
=\E_x[f(x)e^{-{\IM}\omega_\ell\cdot x}]\,. 
\]
Thus if the $x_n$-data points are distributed according to a distribution %
with a density $\rho:\rset^d\to [0,\infty)$ we have 
\begin{equation}\label{normaleq}
\sum_{k=1}^K\int_{\R^d} e^{{\IM} (\omega_k-\omega_\ell)\cdot x}\hat\zeta_k \rho(x) \rmD x +\lambda\hat\zeta_\ell
=\int_{\R^d} f(x)e^{-{\IM}\omega_\ell\cdot x} \rho(x)\rmD x\,
\end{equation}
and the normal equations can be written in the Fourier space as
\begin{equation}\label{limit_Z_eq}
\sum_{k=1}^K \hat\rho(\omega_\ell-\omega_k)\hat\zeta_k +\lambda\hat\zeta_\ell
=\widehat{(f\rho)}(\omega_\ell)\,,\;\;\; \ell=1,\ldots, K\,.
\end{equation}
Given the solution $\boldsymbol{\hat\zeta}_K=(\hat\zeta_1,\dots,\hat\zeta_K)$ of the normal equation \eqref{limit_Z_eq} we define $\mathbf{\hat z}_K=(\hat z_1,\dots,\hat z_K)$ 
\begin{equation}\label{zhat}
   \hat z_k := K\,p(\omega_k)\hat\zeta_k\,.
\end{equation}
Given a sequence of samples $\{\omega_k\}_{k=1}^\infty$ drawn independently from a density $p$ we impose the following assumptions:
\begin{itemize}
\item[(i)] there exists a constant $C$ such that
\begin{equation}\label{max}
     \sum_{k=1}^K |\hat\zeta_k| \equiv \frac{1}{K}\sum_{k=1}^K\frac{|\hat z_k|}{ p(\omega_k)} \le C \tag{A1}
\end{equation}
for all $K>0$,
\item[(ii)] as $K\to\infty$ we have
\begin{equation}\label{unif_b}
   \lim_{K\to\infty} \max_{k\in\{1,\ldots,K\}} |\hat\zeta_k|\equiv \lim_{K\to\infty} \max_{k\in\{1,\ldots,K\}}\frac{|\hat z_k|}{K\,p(\omega_k)}=0\,,
\tag{A2}
\end{equation}
\item[(iii)] there is a bounded open set $\mathcal{U}\subset\R^d$ such that
\begin{equation}\label{suppassume}
    \SUPP \hat f\subset \SUPP p\subset \mathcal{U}\, ,\tag{A3}
\end{equation} 
\item[(iv)] the sequence $\{\omega_k\}_{k=1}^\infty$ is dense in the support of $p$, i.e.
\begin{equation}\label{eq:dense}
\overline{\{\omega_k\}_{k=1}^\infty}=\SUPP p\, .\tag{A4}
\end{equation}
\end{itemize}
We note that \eqref{eq:dense} almost follows from \eqref{suppassume}, since that implies that the density $p$ has bounded first moment. Hence the law of large numbers implies that with probability one the sequence $\{\omega_k\}_{k=1}^\infty$ is dense in the support of $p$. 
In order to treat the limiting behaviour of $\mathbf{\hat z}_K$ as $K\to\infty$ 
we introduce the empirical measure
\begin{equation}\label{empirical_def}
\hat Z_K(\omega):=\frac{1}{K}\sum_{k=1}^K\frac{\hat z_k}{p(\omega_k)}\delta(\omega-\omega_k)\,.
\end{equation}
Thus we have for $\ell=1,\ldots, K$
\begin{equation}\label{z_conv}
\sum_{k=1}^K \hat\rho(\omega_\ell-\omega_k)\hat\zeta_k
=\int_{\rset^d}\hat\rho(\omega_\ell-\omega) \hat Z_K(\rmD \omega)
\,, 
\end{equation}
so that the normal equations \eqref{limit_Z_eq} take the form
\begin{equation}\label{normal_Z}
\int_{\rset^d}\hat\rho(\omega_\ell-\omega) \hat Z_K(\rmD \omega) +\lambda\hat\zeta_\ell=
\widehat{(f\rho)}(\omega_\ell)\,,\;\;\; \ell=1,\ldots, K\,.
\end{equation}
By the assumption \eqref{max} the empirical measures are uniformly bounded in the total variation norm
\[
\int_{\rset^d}|\hat Z_K|(\rmD \omega)= \frac{1}{K}\sum_{k=1}^K \frac{|\hat z_k|}{ p(\omega_k)}\le C\,.
\]
We note that by \eqref{suppassume} the measures $\hat Z_K$ in $\rset^d$ have their support in $\mathcal{U}$. We obtain the weak convergence result stated as the following 
Proposition.
\begin{proposition}\label{thm_improve_p}
Let $\boldsymbol{\hat\zeta}_K$ be the solution of the normal equation \eqref{limit_Z_eq} and $\hat Z_K$ the empirical measures defined by \eqref{empirical_def}.
Suppose that the assumptions \eqref{max}, \eqref{unif_b},  \eqref{suppassume}, and \eqref{eq:dense} hold, and that the density of $x$-data, $\rho$, has support on all of $\rset^d$ and satisfies $\hat\rho\in C^1$,
then 
\begin{equation}\label{thm_lim}
\lim_{\varepsilon\to 0+}\lim_{K\to\infty} 
\int_{\R^d} \phi_\varepsilon(\cdot - \omega')\,\hat Z_K(\rmD \omega')
=\hat f\,,\;\;\;\;\;\mbox{ in $L^1(\rset^d)$}\,,
\end{equation}
where $\phi_\varepsilon:\rset^d\to \rset$ are non negative smooth functions with a support in the ball $
\mathcal{B}_\varepsilon=
\{\omega\in\rset^d\,\big|\, |\omega|\le \varepsilon\}$ and
satisfying $\int_{\rset^d}\phi_\varepsilon(\omega)\rmD \omega=1$.
\end{proposition}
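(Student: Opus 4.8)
The plan is to pass from the empirical measures $\hat Z_K$ to a weak-$*$ subsequential limit, identify that limit with $\hat f\,\rmD\omega$ by taking the limit in the normal equations \eqref{normal_Z}, and then unwind the mollification and the order of the two limits. \emph{Compactness:} by \eqref{max} the total variations $\int_{\rset^d}|\hat Z_K|(\rmD\omega)=\tfrac1K\sum_k|\hat z_k|/p(\omega_k)$ are bounded by $C$ uniformly in $K$, and by \eqref{suppassume} each $\hat Z_K$ is supported in the fixed bounded set $\mathcal U$. Since finite complex Borel measures on $\overline{\mathcal U}$ are the dual of $C(\overline{\mathcal U})$, Banach--Alaoglu gives a subsequence with $\hat Z_{K_j}\rightharpoonup\hat Z$ weakly-$*$ for a finite complex measure $\hat Z$; since $\hat Z_{K_j}$ is carried by $\{\omega_1,\dots,\omega_{K_j}\}$, the support of $\hat Z$ lies in $\overline{\{\omega_k\}_{k=1}^{\infty}}=\SUPP p$ by \eqref{eq:dense}.

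Next I would pass to the limit in \eqref{normal_Z}. Fix $\omega^{*}\in\SUPP p$; by \eqref{eq:dense} choose indices $\ell_j\le K_j$ with $\omega_{\ell_j}\to\omega^{*}$. Evaluating \eqref{normal_Z} at $\ell=\ell_j$ and letting $j\to\infty$: the regularization term $\lambda\hat\zeta_{\ell_j}$ tends to $0$ by \eqref{unif_b}; the right-hand side $\widehat{(f\rho)}(\omega_{\ell_j})\to\widehat{(f\rho)}(\omega^{*})$ because $f\rho\in L^1(\rset^d)$ has a continuous Fourier transform; and for the convolution term we write
\[
\int_{\rset^d}\hat\rho(\omega_{\ell_j}-\omega)\,\hat Z_{K_j}(\rmD\omega)=\int_{\rset^d}\hat\rho(\omega^{*}-\omega)\,\hat Z_{K_j}(\rmD\omega)+\int_{\rset^d}\big(\hat\rho(\omega_{\ell_j}-\omega)-\hat\rho(\omega^{*}-\omega)\big)\hat Z_{K_j}(\rmD\omega)\,,
\]
where the first term converges to $\int_{\rset^d}\hat\rho(\omega^{*}-\omega)\hat Z(\rmD\omega)$ by weak-$*$ convergence against the continuous function $\hat\rho(\omega^{*}-\cdot)$, while the second is at most $C\sup_{\omega\in\overline{\mathcal U}}|\hat\rho(\omega_{\ell_j}-\omega)-\hat\rho(\omega^{*}-\omega)|$, which tends to $0$ by uniform continuity of $\hat\rho$ (assured by $\hat\rho\in C^{1}$). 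This yields $\int_{\rset^d}\hat\rho(\omega^{*}-\omega)\,\hat Z(\rmD\omega)=\widehat{(f\rho)}(\omega^{*})$ for every $\omega^{*}\in\SUPP p$.

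To identify $\hat Z$, set $\zeta_\infty(x):=\int_{\rset^d}e^{{\IM}\omega\cdot x}\hat Z(\rmD\omega)$, a bounded continuous function; the identity just obtained says precisely that the Fourier transform of $\rho(\zeta_\infty-f)\in L^1(\rset^d)$ vanishes on $\SUPP p$. Since the Fourier transform $\widehat{(\zeta_\infty-f)}$ is a finite complex measure supported in $\SUPP p$ (its two pieces being $\hat Z$ and $\hat f$, using \eqref{suppassume}–\eqref{eq:dense}), the relation $\hat\rho*\widehat{(\zeta_\infty-f)}=0$ holds on $\SUPP p$ and hence on the support of $\widehat{(\zeta_\infty-f)}$; pairing it with the conjugate measure $\overline{\widehat{(\zeta_\infty-f)}}$ and expanding $\hat\rho$ as the Fourier integral of $\rho$, Fubini collapses the resulting double integral to $\int_{\rset^d}\rho(x)\,|\zeta_\infty(x)-f(x)|^{2}\,\rmD x=0$. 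As $\rho\ge0$ with $\SUPP\rho=\rset^{d}$ and $\zeta_\infty-f$ is continuous, $\zeta_\infty\equiv f$, equivalently $\hat Z=\hat f\,\rmD\omega$; being independent of the subsequence, this shows $\hat Z_K\rightharpoonup\hat f\,\rmD\omega$. Finally, for the double limit: for fixed $\varepsilon>0$, weak-$*$ convergence gives $\int\phi_\varepsilon(x-\omega')\,\hat Z_K(\rmD\omega')\to(\phi_\varepsilon*\hat f)(x)$ for each $x$; since these functions are supported in the fixed bounded set $\mathcal U+\mathcal B_\varepsilon$ and bounded by $C\|\phi_\varepsilon\|_{\infty}$, dominated convergence makes this an $L^{1}(\rset^d)$ limit, and then $\phi_\varepsilon*\hat f\to\hat f$ in $L^{1}(\rset^d)$ as $\varepsilon\to0+$ by the approximate-identity property, using $\hat f\in L^{1}(\rset^d)$. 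Chaining the two limits proves \eqref{thm_lim}.

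The main obstacle is the identification: taking the limit in the normal equations at \emph{moving} evaluation points $\omega_{\ell_j}$ couples weak-$*$ convergence with the uniform continuity of $\hat\rho$ and the density of the samples, and the concluding step $\zeta_\infty\equiv f$ uses in an essential way that $\SUPP\rho=\rset^{d}$ — this positivity is exactly the simplification flagged before the statement, since without it one only controls $\widehat{\rho(\zeta_\infty-f)}$ on $\SUPP p$ and cannot deduce $\zeta_\infty=f$. A secondary point is that the limiting measure must be absolutely continuous with an $L^{1}$ density, which is what allows the mollification in the last step to converge in $L^{1}$ rather than forming spikes; this is delivered by the energy identity $\int_{\rset^d}\rho\,|\zeta_\infty-f|^{2}=0$.
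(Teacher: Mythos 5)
Your argument is correct and reaches the paper's conclusion, but its execution differs from the paper's proof in two substantive ways. First, you extract a weak-$*$ limit of the measures $\hat Z_K$ directly (Banach--Alaoglu on measures over $\overline{\mathcal{U}}$, total variation bounded by \eqref{max}, supports fixed by \eqref{suppassume}), obtain an \emph{exact} limit identity $\int\hat\rho(\omega^*-\omega)\,\hat Z(\rmD\omega)=\widehat{(f\rho)}(\omega^*)$ at every $\omega^*\in\SUPP p$ by evaluating \eqref{normal_Z} at moving indices $\omega_{\ell_j}\to\omega^*$ (using \eqref{unif_b}, \eqref{eq:dense} and uniform continuity of $\hat\rho$), and mollify only at the very end, where dominated convergence for fixed $\varepsilon$ and the approximate-identity property give \eqref{thm_lim}. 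The paper instead mollifies first, uses the bound $\|\nabla\hat\chi_K\|_{L^1}=\mathcal{O}(\varepsilon^{-1})$ to get strong $L^1$ compactness of $\hat Z_K*\phi_\varepsilon$, and carries $\mathcal{O}(\varepsilon)$ error terms through a Taylor expansion of $\hat\rho$ (the place where $\hat\rho\in C^1$ is actually invoked) before sending $\varepsilon\to 0$ in \eqref{fp_eq}. Second, your identification of the limit is genuinely different: the paper passes from the sample points to ``all $\omega$'' and applies the inverse Fourier transform globally, even though the denseness assumption \eqref{eq:dense} only controls points of $\SUPP p$; you keep the identity on $\SUPP p$, note that the measure $\hat Z-$(Fourier transform of $f$) is supported there by \eqref{suppassume}, and close with the pairing/Fubini computation giving $\int_{\rset^d}\rho\,|\zeta_\infty-f|^2\,\rmD x=0$, which uses $\SUPP\rho=\rset^d$ in the same essential way but repairs the step the paper glosses over; it also only needs uniform continuity of $\hat\rho$ rather than the Taylor expansion. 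One cosmetic caveat: with the unitary Fourier convention of Section 2, the measure you should subtract is $(2\pi)^{-d/2}\hat f\,\rmD\omega$, so the identified limit carries a factor $(2\pi)^{-d/2}$ relative to what you wrote; the paper's own ``inverse Fourier transform'' step hides the same constant, so this is an inherited normalization issue rather than a gap in your reasoning.
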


\begin{proof}%
To simplify the presentation we introduce 
\[
\hat\chi_{K,\varepsilon}(\omega) := \hat Z_K * \phi_\varepsilon(\omega) = \int_{\R^d} \phi_\varepsilon(\omega - \omega') \hat Z_K(\rmD \omega')\,.
\]
The proof consists of three steps:
\begin{itemize}
\item[1.] compactness yields a $L^1$ convergent subsequence  of the regularized empirical measures $\{\hat\chi_{K_j,\varepsilon}\}_{j=1}^\infty$,
\item[2.] the normal equation \eqref{limit_Z_eq} implies a related equation for the subsequence limit, and
\item[3.] a subsequence of the empirical measures converges weakly and as $\varepsilon\to 0+$ the limit normal equation establishes \eqref{thm_lim}.
\end{itemize}

{\it Step 1.} As $\phi_\varepsilon$ are standard mollifiers, we have, for a fixed $\varepsilon >0$, that the smooth functions (we omit $\varepsilon$ in the notation $\hat\chi_{K,\varepsilon}$)
\[
\hat\chi_{K} \equiv \hat Z_K*\phi_\varepsilon:\rset^d\to \cset
\] 
have uniformly bounded with respect to $K$ derivatives
$\|\nabla \hat\chi_K\|_{L^1(\rset^d)}=\mathcal O(\varepsilon^{-1})$. 
Let $\Vv$ be the Minkowski sum
$\Vv=\Uu+\mathcal{B}_\varepsilon = \{a+b : a\in\Uu, b\in\mathcal{B}_\varepsilon\}$.
By compactness, see \cite{evans}, there is a  $L^1(\Vv)$ converging subsequence of functions $\{\hat \chi_K\}$, i.e., $\hat\chi_K\to \hat\chi$ in $L^1(\Vv)$
as $K\rightarrow\infty$.
Since as a consequence of the assumption \eqref{suppassume} we have 
that $\SUPP \hat Z_K \subset \Uu$ for all $\hat Z_K$, and hence $\SUPP \hat\chi_K \subset \Vv$ for all $\hat\chi_K$, then the limit
$\hat \chi$ has its support in $\Vv$. Hence $\hat\chi$ can be extended to zero on $\rset^d\setminus \Vv$. Thus we obtain
\begin{equation}\label{g_lim}
\lim_{K\to\infty}\int_{\rset^d} g(\omega)\hat\chi_K(\rmD \omega)=\int_{\rset^d} g(\omega) \hat \chi(\omega) \rmD\omega
\end{equation}
for all $g\in C^1(\rset^d)$.

{\it Step 2.} The normal equations %
\eqref{normal_Z} can be written as a perturbation of the convergence \eqref{g_lim} using that
we have
\[
\int_{\rset^d} g(\omega)\hat Z_K(\rmD \omega)
-\int_{\rset^d} g(\omega)\hat\chi_K(\rmD \omega)
=\int_{\rset^d} \big(g(\omega)-g*\phi_\varepsilon(\omega)\big)
\hat Z_K(\rmD \omega)=\mathcal O(\varepsilon)\,.
\]
Thus we re-write the term $\int \hat\rho(\omega_\ell - \omega') \hat Z_K(\rmD\omega')$  in 
\eqref{normal_Z} as
\[
\int_{\R^d} \hat\rho(\omega-\omega') \hat Z_K(\rmD\omega') =  \int_{\R^d} \hat\rho(\omega - \omega') \hat \chi_K(\omega')\rmD\omega' + \mathcal{O}(\varepsilon)\,,
\]
now considering a general point $\omega$ instead of $\omega_l$ and the change of measure from $\hat Z_K$ to $\hat \chi_K$, 
and by Taylor's theorem%
\[
\hat\rho(\omega-\omega')=
    \hat\rho(\omega_p-\omega')
    +\hat\rho(\omega-\omega')-\hat\rho(\omega_p-\omega')
    =\hat\rho(\omega_p-\omega')
 +\int_0^1 \nabla\hat\rho\big(s\omega +(1-s)\omega_p-\omega'\big)\rmD s \cdot(\omega-\omega_p)
\]
where 
\[
\min_{p\in \{1,\ldots, K\}}|\int_0^1 \nabla\hat\rho\big(s\omega +(1-s)\omega_p-\omega'\big)\rmD s \cdot(\omega-\omega_p)|\to 0\,,\;\;\;
\mbox{as $K\to \infty$.}
\]
since by assumption the set $\{\omega_k\}_{k=1}^\infty$ is dense in the support of $p$.
Since $\lambda\hat\zeta_\ell \to 0$, as $K\to\infty$ by assumption %
\eqref{unif_b},
the normal equation %
\eqref{normal_Z} implies that
the limit is determined by
\begin{equation}\label{fp_eq}
\widehat{(f\rho)}(\omega)=\int_{\rset^d}\hat\rho(\omega-\omega') \hat \chi(\omega')\rmD \omega'+\mathcal O(\varepsilon)\,,\quad \omega\in\rset^d\,.
\end{equation}
We have here used that the function $\widehat{f\rho}$ is continuous as $\hat\rho\in C^1$, and the denseness of the sequence $\{\omega_k\}_{k=1}^\infty$.
{\it Step 3.} From the assumption \eqref{max} all $\hat Z_K$ are uniformly bounded in the total variation norm and supported on a compact set, therefore there is a weakly converging subsequence $\hat Z_K\rightharpoonup \hat Z$, i.e., for all $g\in C^1(\Vv)$
\[
   \lim_{K\to\infty}\int_{\Vv}g(\omega)\hat Z_K(\rmD \omega)\to 
   \int_{\Vv}g(\omega)\hat Z(\rmD \omega)\,.  
\]
This subsequence of $\hat Z_K$ can be chosen as a subsequence of the converging sequence $\hat\chi_K$.
Consequently we have 
\[
  \lim_{K\to\infty} \hat Z_K*\phi_\varepsilon=\hat Z*\phi_\varepsilon=\hat\chi\,.
\]
As $\varepsilon\to 0_+$ in $\hat Z*\phi_\varepsilon$ we obtain by \eqref{fp_eq}
\[
   \widehat{(f\rho)}(\omega)=\int_{\rset^d}\hat\rho(\omega-\omega') \hat Z(\rmD \omega')\,,\;\;\;\; \omega\in\rset^d\,,
\]
and we conclude, by the inverse Fourier transform, that
\[
Z(x)\rho(x)=f(x)\rho(x)\,,\;\;\;\; x\in\rset^d\,,
\]
for $f\in C_0(\rset^d)$ and $\rho$ in the Schwartz class.  
If the support of $\rho$ is $\rset^d$ we obtain that $\hat Z=\hat f\in L^1(\rset^d)$.
\end{proof}

\medskip

The approximation in Proposition ~\ref{thm_improve_p} is in the sense of the limit of the large data set, $N\to\infty$, which implies $\BFBH= \boldsymbol{\hat\zeta}_K$.
Then by the result of the proposition the regularized empirical measure for $\boldsymbol{\hat\zeta}_K$, namely $\hat Z_K*\phi_\varepsilon$, satisfies 
\[\hat Z_K*\phi_\varepsilon\underbrace{\to}_{K\to\infty} \hat f*\phi_\varepsilon\underbrace{\to}_{\varepsilon\to 0+} \hat f\,,
\mbox{ in $L^1(\rset^d)$}\,,
\]
which shows that $Kp(\omega_k)\hat\beta_k$ converges weakly to $\hat f(\omega_k)$ as $K\to\infty$ and we have $|\hat f(\omega_k)|=p_*(\omega_k) \|\hat f\|_{L^1(\rset^d)}$.
We remark that this argument gives heuristic justification for the 
proposed adaptive algorithm to work, in particular, it explains an idea behind the choice of the likelihood ratio in Metropolis accept-reject criterion. 

\begin{remark}\label{optimal_alpha}
By Proposition~\ref{thm_improve_p}  $K\, p(\omega_k)\hat\beta_k$ converges weakly 
to $\hat f(\omega_k)$ as $K\to\infty$. 
If it also converged strongly,
the asymptotic sampling density for $\omega$ in the random feature Metropolis method would  satisfy $p=(C|\hat f|)^\ALPHAEX/p^\ALPHAEX$
which  has the fixed point solution
$p=(C|\hat f|)^{\frac{\ALPHAEX}{\ALPHAEX+1}}$.
As $\ALPHAEX\to \infty$ this density $p$
approaches the optimal $|\hat f|/\|\hat f\|_{L^1(\rset^d)}$.
On the other hand the computational work increases with larger $\ALPHAEX$, in particular the number of Metropolis steps is asymptotically determined by the diffusion approximation in \cite{roberts_1997} and becomes inversely proportional to the variance of the target density, which now depends on $\ALPHAEX$. If the target density is Gaussian with the standard deviation ${\sigma}$, the number of Metropolis steps are then approximately  $\mathcal O(\ALPHAEX d {\sigma}^{-2})$ while the density is
asymptotically proportional to $|\hat f|^{\ALPHAEX/(\ALPHAEX +1)}\sim e^{-\frac{|\omega|^2\ALPHAEX}{2{\sigma}^2(\ALPHAEX +1)}}$ which yields $\int_{\rset^d}\frac{|\hat f(\omega)|^2}{p(\omega)}\rmD \omega=\mathcal O\big((1+2\ALPHAEX^{-1})^{d/2}\big)$. Thus the work for loss $\epsilon$ is roughly proportional to $\epsilon^{-3}(1+2\ALPHAEX^{-1})^{3d/2} d\ALPHAEX{\sigma}^{-2}$ which is minimal for $\ALPHAEX=3d-2$.
\end{remark}

\begin{remark}[How to remove the assumptions \eqref{max} and \eqref{unif_b}]
Assumption \eqref{max} will hold if we replace the minimization \eqref{zeta_min} by 
\begin{equation*}\label{zeta_min_2}
\min_{\zeta\in\mathcal N_K}\Big(\E[|\zeta(x)-f(x)|^2\ |\ \omega] +\lambda|\hat{\boldsymbol{\zeta}}|^2
 + \lambda_1 \sum_{k=1}^K\max(0,|\hat \zeta_k|-\frac{\lambda_2}{K})\Big)\,,
\end{equation*}
where $\lambda_1$ and $\lambda_2$ are  positive constants with
$\lambda_2> \|\frac{\hat f}{p}\|_{L^\infty(\rset^d)}$, and this additional penalty yields a least squares problem
with the same accuracy as \eqref{main_rate}, since for the optimal solution the penalty vanishes.

The other assumption \eqref{unif_b}, which is used to 
obtain $\lambda|\zeta_\ell |\to 0$ as $K\to\infty$,
can be removed by letting $\lambda$ tend to zero slowly as $K\to\infty$,
since then by \eqref{max} we obtain 
\[
\lambda|\zeta_\ell | \le \lambda \sum_{k=1}^K|\zeta_k|\le \lambda C\to 0\ \mbox{ as $K\to\infty$}.
\]
\end{remark}

\section{Description of algorithms}\label{sec:adaptive}

\begin{algorithm}[tb]
\caption{Adaptive random Fourier features with Metropolis sampling}\label{alg:ARFM}
\begin{algorithmic}
\STATE {\bfseries Input:} $\{(x_n, y_n)\}_{n=1}^N$\COMMENT{data}
\STATE {\bfseries Output:} $x\mapsto\sum_{k=1}^K\hat\beta_ke^{{\IM}\omega_k\cdot x}$\COMMENT{random features}
\STATE Choose a sampling time $T$, a proposal step length $\delta$,  an exponent $\ALPHAEX$ (see Remark \ref{optimal_alpha}), a Tikhonov parameter $\lambda$ and a frequency $m$ of  $\boldsymbol{\hat{\beta}}$ updates
\STATE $M \gets \mbox{ integer part}\, (T/\delta^2)$
\STATE ${\BFO} \gets \textit{the zero vector in $\rset^{Kd}$}$
\STATE $\boldsymbol{\hat{\beta}} \gets \textit{minimizer of the problem \eqref{eq:num_disc_prob} given } \BFO$
\FOR{$i = 1$ {\bfseries to} $M$}
    \STATE $r_{\mathcal{N}} \gets \textit{standard normal random vector in $\rset^{Kd}$}$
    \STATE $\BFO' \gets \BFO + \delta r_{\mathcal{N}}$ \COMMENT{random walk Metropolis proposal}
    \STATE $\boldsymbol{\hat{\beta}}' \gets \textit{minimizer of the problem \eqref{eq:num_disc_prob} given } \BFO'$
    \FOR{$k = 1$ {\bfseries to} $K$}
        \STATE  $r_{\mathcal{U}} \gets \textit{sample from uniform distr. on $[0,1]$}$ %
        \IF {$|\hat{\beta}'_k|^\ALPHAEX/|\hat{\beta}_k|^\ALPHAEX>r_{\mathcal{U}}$\COMMENT{Metropolis test}}
                    \STATE $\omega_{k} \gets \omega'_k$
                    \STATE $\hat{\beta}_{k} \gets \hat{\beta}'_k$
                \ENDIF
            \ENDFOR
            \IF {$i \mod m = 0$}
                \STATE $\boldsymbol{\hat{\beta}} \gets \textit{minimizer of the problem \eqref{eq:num_disc_prob} with adaptive } \BFO$
            \ENDIF
\ENDFOR
\STATE $\boldsymbol{\hat{\beta}} \gets \textit{minimizer of the problem \eqref{eq:num_disc_prob} with adaptive } \BFO$
\STATE $x\mapsto\sum_{k=1}^K\hat\beta_ke^{{\IM}\omega_k\cdot x}$

\end{algorithmic}
\end{algorithm}

In this section we formulate the adaptive random features Algorithm 1, and its extension, Algorithm 2, which adaptively updates the covariance matrix when sampling frequencies $\BFO$. Both algorithms are tested on different data sets and the tests are described in Section \ref{sec:Benchmarks}.

Before running Algorithm \ref{alg:ARFM} or \ref{alg:ARFM_acov} we normalize all training data to have mean zero and component wise standard deviation one. The normalization procedure is described in Algorithm \ref{alg:normalization_of_data}.

A discrete version of problem \eqref{eq:minsquare} can be formulated, for training data $\{(x_n, y_n)\}_{n=1}^N$, as the standard least squares problem
\begin{equation}\label{eq:num_disc_prob}
    \min_{\boldsymbol{\hat{\beta}}\in \mathbb{C}^K}\left\{N^{-1}|\BARS\boldsymbol{\hat{\beta}}-\mathbf y|^2 + \lambda|\boldsymbol{\hat{\beta}}|^2\right\}
\end{equation}
where $\BARS\in\mathbb{C}^{N\times K}$ is the matrix with elements $\BARS_{n,k} = e^{{\IM}\omega_k\cdot x_n}$, $n = 1,...,N$, $k = 1,...,K$ and $\mathbf y=(y_1,\ldots,y_N)\in \mathbb{R}^N$. Problem \eqref{eq:num_disc_prob} has the corresponding linear normal equations
\begin{equation}\label{eq:num_normal_eq}
    (\BARS^T\BARS+\lambda N \ID)\boldsymbol{\hat{\beta}} = \BARS^T\mathbf y
\end{equation}
which can be solved e.g. by singular value decomposition if $N\sim K$ or by the stochastic gradient method if $N\gg K$,  cf. \cite{trefethen_bau_1997} and \cite{understand}. Other alternatives when $N\gg K$ are to sample the data points
using information about the kernel in the random features, see e.g.\,\cite{bach}.
Here we do not focus on the interesting and important question of how to optimally sample data points. Instead we focus on how to sample random features, i.e., the frequencies $\omega_k$.

In the random walk Metropolis proposal step in Algorithm \ref{alg:ARFM} the vector $r_{\mathcal{N}}$ is a sample from the standard multivariate normal distribution. The choice of distribution to sample $r_{\mathcal{N}}$ from is somewhat arbitrary and not always optimal. Consider for example a target distribution in two dimensions that has elliptically shaped level surfaces. From a computational complexity point of view one would like to take different step lengths in different directions.

The computational complexity reasoning leads us to consider to sample $r_{\mathcal{N}}$ from a multivariate normal distribution with a covariance matrix $C_t$ adaptively updated during the $M$ iterations. The general idea of adaptively updating a covariance matrix during Metropolis iterations is not novel. A recursive algorithm for adaptively updating the covariance matrix is proposed and analysed in \cite{haario2001} and further test results are presented in \cite{roberts_examples}.

In Algorithm~\ref{alg:ARFM} we choose the initial step length value $\delta = 2.4^2/d$ which is motivated for general Metropolis sampling in \cite{roberts2001}. When running Algorithm~\ref{alg:ARFM} the value of $\delta$ can be adjusted as a hyperparameter depending on the data.

In Algorithm~\ref{alg:ARFM_acov} there is the hyperparameter $\omega_{\text{max}}$ which defines the maximum radius of frequencies $\omega$ that can be sampled by Algorithm~\ref{alg:ARFM_acov}. In some problems the sampling of frequencies will start to diverge unless $\omega_{\text{max}}$ is finite. This will typically happen if the frequency distribution is slowly decaying as $|\omega|\to\infty$. %
In the convergence proof of \cite{haario2001} the probability density function of the distribution to be sampled from is required to have compact support. In practice though when applying Algorithm~\ref{alg:ARFM_acov} we notice that a normal distribution approximates compactness sufficiently well to not require $\omega_{\text{max}}$ to be finite. Another approach is to let $\omega_{\text{max}}$ be infinity and adjust the hyperparamerters $\delta$ and $M$ so that the iterations does not diverge from the minima.
All hyperparameters are adjusted to minimize the error computed on a validation set disjoint from the training and test data sets.

With adaptive covariance combined into Algorithm \ref{alg:ARFM} we sample $r_{\mathcal{N}}$ from $ \mathcal{N}(\boldsymbol{0}, \Bar{C})$ where initially $\Bar{C}$ is the identity matrix in $\mathbb{R}^{d\times d}$. After each iteration $i = 1,2,...,M$ 
the covariance $\Bar{C}'$ of all previous frequencies $\omega_k^j$, $k = 1,2,...,K$, $j < i$ is computed. After $t_0$ iterations we update $\Bar{C}$ with the value of $\Bar{C}'$.
We present adaptive covariance applied to Algorithm \ref{alg:ARFM} in Algorithm \ref{alg:ARFM_acov}.

The initial value of $\BFO$ is set to the zero vector in $\rset^{Kd}$ so at iteration $i = 1$, the proposal frequency $\BFO'$ will be a sample from $\mathcal{N}(\boldsymbol{0}, \text{diag}([\delta^2, \delta^2,..., \delta^2])$ where $[\delta^2, \delta^2,..., \delta^2]$ is a vector in $\mathbb{R}^d$.

\begin{algorithm}[tb]
\caption{Adaptive random Fourier features with Metropolis sampling and adaptive covariance}\label{alg:ARFM_acov}
\begin{algorithmic}
\STATE {\bfseries Input:} $\{(x_n, y_n)\}_{n=1}^N$\COMMENT{data}
\STATE {\bfseries Output:} $x\mapsto\sum_{k=1}^K\hat\beta_ke^{{\IM}\omega_k\cdot x}$\COMMENT{random features}
\STATE Choose a sampling time $T$, a proposal step length $\delta$,  an exponent $\ALPHAEX$ (see Remark \ref{optimal_alpha}), a Tikhonov parameter $\lambda$, a burn in time $t_0$ for the adaptive covariance, a maximum frequency radius $\omega_{\text{max}}$ and a number $\check{N}$ of $\boldsymbol{\hat{\beta}}$ updates
\STATE $M \gets \mbox{ integer part}\, (T/\delta^2)$
\STATE ${\BFO} \gets \textit{the zero vector in $\rset^{Kd}$}$
\STATE $\boldsymbol{\hat{\beta}} \gets \textit{minimizer of the problem \eqref{eq:num_disc_prob} given } \BFO$
\STATE $S_{\omega} \gets 0$
\STATE $C_{\omega} \gets \textit{the zero matrix in } \mathbb{R}^{d\times d}$
\STATE $\Bar{C} \gets \textit{identity matrix in }\mathbb{R}^{d\times d}$
\FOR{$i = 1$ {\bfseries to} $M$}
    \STATE $r_{\mathcal{N}} \gets \textit{sample from $\mathcal{N}(\boldsymbol{0}, \Bar{C})$}$
    \STATE $\BFO' \gets \BFO + \delta r_{\mathcal{N}}$ \COMMENT{random walk Metropolis proposal}
    \STATE $\boldsymbol{\hat{\beta}}' \gets \textit{minimizer of the problem \eqref{eq:num_disc_prob} given } \BFO'$
    \FOR{$k = 1$ {\bfseries to} $K$}
        \STATE  $r_{\mathcal{U}} \gets \textit{sample from uniform distr. on $[0,1]$}$ %
        \IF {$|\hat{\beta}'_k|^\ALPHAEX/|\hat{\beta}_k|^\ALPHAEX>r_{\mathcal{U}}$ \AND $|\omega_k'| < \omega_{\text{max}}$\COMMENT{Metropolis test}}
                    \STATE $\omega_{k} \gets \omega'_k$
                    \STATE $\hat{\beta}_{k} \gets \hat{\beta}'_k$
                \ENDIF
                \STATE $S_{\omega} \gets S_{\omega} + \omega_k$
                \STATE $S_{C} \gets S_{C} + \omega_k^T\omega_k$
            \ENDFOR
            \STATE $\Bar{\omega'} \gets S_{\omega}/(iK)$
            \STATE $\Bar{C'} \gets S_{C}/(iK) - \Bar{\omega'}^T\Bar{\omega'}$
            \IF {$i > t_0$}
            \STATE $\Bar{C} \gets \Bar{C'}$
            \ENDIF
            \IF {$i \mod m = 0$}
                \STATE $\boldsymbol{\hat{\beta}} \gets \textit{minimizer of the problem \eqref{eq:num_disc_prob} with adaptive } \BFO$
            \ENDIF
\ENDFOR
\STATE $\boldsymbol{\hat{\beta}} \gets \textit{minimizer of the problem \eqref{eq:num_disc_prob} with adaptive } \BFO$
\STATE $x\mapsto\sum_{k=1}^K\hat\beta_ke^{{\IM}\omega_k\cdot x}$

\end{algorithmic}
\end{algorithm}

\begin{algorithm}[tb]
\caption{Normalization of data}\label{alg:normalization_of_data}
\begin{algorithmic}
\STATE {\bfseries Input:} $\{(x_n, y_n)\}_{n=1}^N$\COMMENT{data}
\STATE {\bfseries Output:} $\{(x_n, y_n)\}_{n=1}^N$\COMMENT{normalized data}
\STATE $\bar y \gets \frac{1}{N}\sum_{n=1}^N y_n$
\STATE $\bar x^j\gets \frac{1}{N}\sum_{n=1}^N x_n^j,\, j = 1,2,...,d$
\STATE $\sigma_y \gets \sqrt{\frac{\sum_{n=1}^N(y_n-\bar y)^2}{N-1}}$
\STATE $\sigma_{x^j} \gets \sqrt{\frac{\sum_{n=1}^N(x_n^j-\bar x^j)^2}{N-1}},\, j = 1,2,...,d$
\STATE $\{(x_n, y_n)\}_{n=1}^N \gets \{(\frac{x_n^1-\bar{x}_n^1}{\sigma_{x^1}}, \frac{x_n^2-\bar{x}_n^2}{\sigma_{x^2}},...,\frac{x_n^d-\bar{x}_n^d}{\sigma_{x^d}}; \frac{y_n - \bar y}{\sigma_y})\}_{n=1}^N$
\end{algorithmic}
\end{algorithm}

\section{Numerical tests}\label{sec:Benchmarks}
We demonstrate different capabilities of the proposed algorithms with three numerical case studies. The first two cases are regression problems and the third case is a classification problem. For the regression problems we also show comparisons with the stochastic gradient method.

The motivation for the first case is to compare the results of the algorithms to the estimate \eqref{main_rate} based on the constant $\E_\omega[\frac{|\hat f(\omega)|^2}{(2\pi)^{d}p^2(\omega)}]$ which is minimized for $p= |\hat f|/\|\hat f\|_{L^1(\rset^d)}$. Both Algorithm~\ref{alg:ARFM} and Algorithm~\ref{alg:ARFM_acov} approximately sample the optimal distribution but for example a standard random Fourier features approach with $p \sim \mathcal{N}(0, 1)$ does not.

Another benefit of Algorithm~\ref{alg:ARFM} and especially Algorithm~\ref{alg:ARFM_acov} is the efficiency in the sense of computational complexity. The purpose of the second case is to study the development of the generalization error over actual time in comparison with a standard method which in this case is an implementation of the stochastic gradient method. The problem is in two dimensions which imposes more difficulty in finding the optimal distribution compared to a problem in one dimension.

In addition to the regression problems in the first two cases we present a classification problem in the third case. It is the classification problem of handwritten digits, with labels, found in the MNIST database. For training the neural network we use Algorithm~\ref{alg:ARFM} and compare with using naive random Fourier features. The purpose of the third case is to demonstrate the ability of Algorithm~\ref{alg:ARFM} to handle non synthetic data.
In the simulations for the first case we perform five experiments:
\begin{itemize}
    \item Experiment 1: The distribution of the frequencies $\BFO\in\rset^{Kd}$ is obtained adaptively by Algorithm~\ref{alg:ARFM}.
    \item Experiment 2: The distribution of the frequencies $\BFO\in\rset^{Kd}$ is obtained adaptively by Algorithm~\ref{alg:ARFM_acov}.
    \item Experiment 3: The distribution of the frequencies $\BFO\in\rset^{Kd}$ is fixed and the independent components $\omega_k$ are sampled from a normal distribution. %
    \item Experiment 4: Both the frequencies $\BFO\in\rset^{Kd}$ and the amplitudes $\boldsymbol{\hat{\beta}}\in \mathbb{C}^K$ are trained by the stochastic gradient method.
    \item Experiment 5: The $\BFO\in\rset^{Kd}$ weight distribution is obtained adaptively by Algorithm~\ref{alg:ARFM} but using the sigmoid activation function. 
\end{itemize}
For the second case we perform Experiment 1-4 and in simulations for the third case we perform Experiment 1 and Experiment 3. All chosen parameter values are presented in Table \ref{table:parameter_comparison}.

We denote by $\BARS_{\text{test}}\in\mathbb{C}^{\tilde{N}\times K}$ the matrix with elements %
$e^{{\IM}\omega_k\cdot \tilde x_n}$.  The test data $\{(\tilde x_n, \tilde y_n) \, |\, n = 1,...,\tilde{N}\}$ are i.i.d. samples from the same probability distribution and normalized by the same empirical mean and standard deviation as the training data $\{(x_n, y_n) \,|\, n=1,\ldots, N$\}.
In the computational experiments we compute the \emph{generalization error} as
\[e_K := \sqrt{\sum_{n=1}^{\tilde{N}}|(\BARS_{\mathrm{test}}\BFBH)_n-\tilde y_n|^2}\,.\]

We denote by ${\sigma}_K$ the empirical standard deviations  of the generalization error, based on $\bar M=10$ independent realizations for each fixed $K$ and let an \emph{error bar} be the closed interval 
\begin{equation*}
    [e_{K} - 2{\sigma}_K, e_{K} + 2{\sigma}_K].
\end{equation*}

The purpose of Experiment 5 is to demonstrate the possibility of changing the activation function $x\mapsto e^{{\IM}\omega\cdot x}$ to the sigmoid activation function $x \mapsto \frac{1}{1 + e^{-\omega\cdot x}}$ when running Algorithm~\ref{alg:ARFM}. With such a change of activation function the concept of sampling frequencies turns into sampling weights. In practice we add one dimension to each $x$-point to add a bias, compensating for using a real valued activation function and we set the value of the additional component to one. Moreover, we change $\BARS_{n,k} = e^{{\IM}\omega_k\cdot x_n}$ in \eqref{eq:num_disc_prob} to $\BARS_{n,k} = \frac{1}{1 + e^{-\omega_k\cdot x_n}}$.

\medskip\noindent
{\it Case 1: Target function with a regularised discontinuity.}
This case tests the capability of Algorithm~\ref{alg:ARFM} and Algorithm~\ref{alg:ARFM_acov} to approximately find and sample frequencies $\BFO$ from the optimal distribution $p_* = |\hat f|/\|\hat f\|_{L^1(\rset)}$. The target function \[f(x) = \mathrm{Si}\left(\frac{x}{a}\right)e^{-\frac{x^2}{2}}\] where $a = 10^{-3}$ and \[\mathrm{Si}(x) := \int_0^x \frac{\sin(t)}{t}\mathrm{d}t\] is the so called \emph{Sine integral} has a Fourier transform that decays slowly as $\omega^{-1}$ up to $|\omega| = 1/a = 1000$. The target function $f$ is plotted  in Figure \ref{fig:f_zoomed}, together with $f$ evaluated in $N$ $x$-points from a standard normal distribution over an interval chosen to emphasize that $N$ points is enough to resolve the local oscillations near $x = 0$.
\begin{figure}[ht]
\centering
\includegraphics[width=0.95\textwidth]{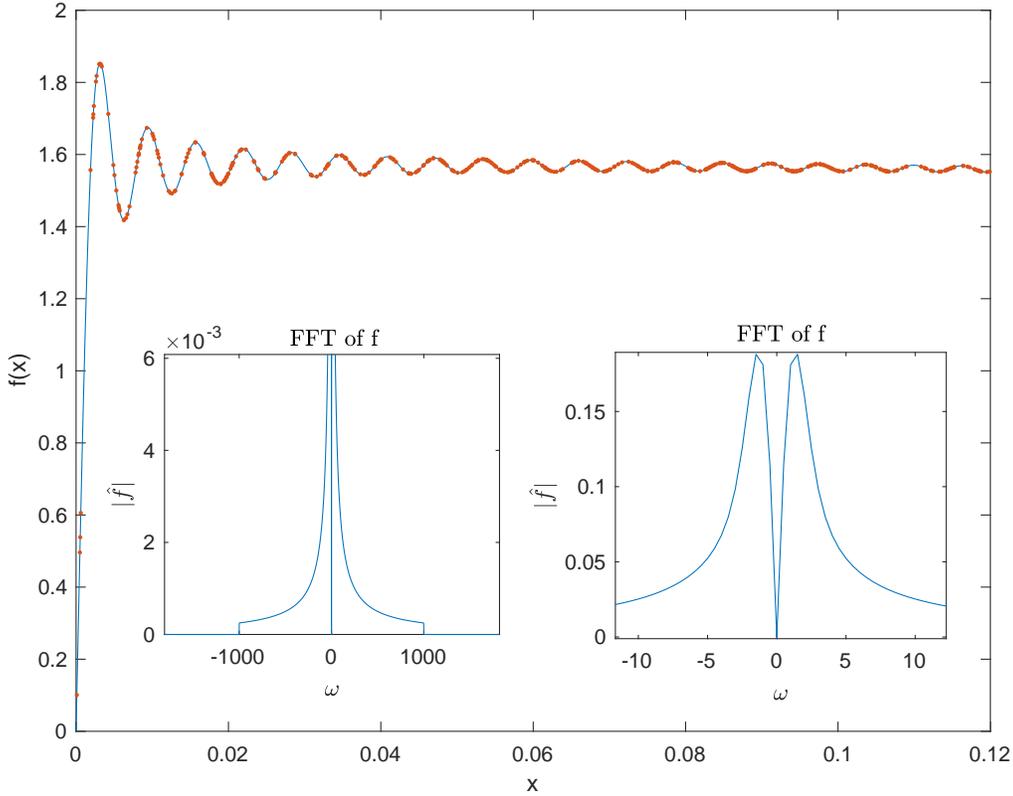}
\caption{Case 1: Graph of the target function $f$ with sampled data set points $(x_n,y_n)$ marked (red on-line). 
The inset shows  $|\hat f|$ of its Fourier transform
and the detail of its behaviour at the origin. }\label{fig:f_zoomed}

\end{figure}

The Fourier transform of $f$ is approximated by computing the fast Fourier transform of $f$ evaluated in $2N$ equidistributed $x$-points in the interval $[-2\pi, 2\pi]$. The inset in Figure~\ref{fig:f_zoomed}  presents the absolute value of the fast Fourier transform of $f$ where we can see that the frequencies drop to zero at approximately $|\BFO| = 1/a = 10^3$.

We generate training data and test data as follows. First sample $N$ $x$-points from $\mathcal{N}(0,1)$. Then evaluate the target function in each $x$-point to get the $y$-points and run Algorithm~\ref{alg:normalization_of_data} on the generated points to get the normalized training data $\{x_n, y_n\}_{n=1}^N$ and analogously the normalized test data $\{\tilde{x}_n, \tilde{y}_n\}_{n=1}$.

We run Experiment 1--5 on the generated data for different values of $K$ and present the resulting generalization error dependence on $K$, with error bars, in Figure~\ref{fig:almost_disc}. The triangles pointing to the left represent generalization errors produced from a neural network trained by Algorithm~\ref{alg:ARFM} and the diamonds by Algorithm~\ref{alg:ARFM_acov}. The stars correspond to the stochastic gradient descent with initial frequencies from 
$\mathcal{N}(0, 50^2)$ while the circles also corresponds to the stochastic gradient descent but with initial frequencies from $\mathcal{N}(0, 1)$. The squares correspond to the standard random Fourier features approach sampling frequencies $\omega_k$ from $\mathcal{N}(0, 1)$. 
The triangles pointing down represent Algorithm~\ref{alg:ARFM} but for the sigmoid activation function. Algorithm~\ref{alg:ARFM} show a constant slope with respect to $K$. Although the generalization error becomes smaller for the stochastic gradient descent as the variance for the initial frequencies increase to $50^2$ from $1$, it stagnates as $K$ increases. 
For a given $K$ one could fine tune the initial frequency distribution for the stochastic gradient descent but for Algorithm~\ref{alg:ARFM} and Algorithm~\ref{alg:ARFM_acov} no such tuning is needed. 
The specific parameter choices for each experiment are presented in Table~\ref{table:parameter_comparison}.

\begin{figure}[ht]
\begin{subfigure}{0.49\textwidth}
\centering
\includegraphics[width=0.95\linewidth]{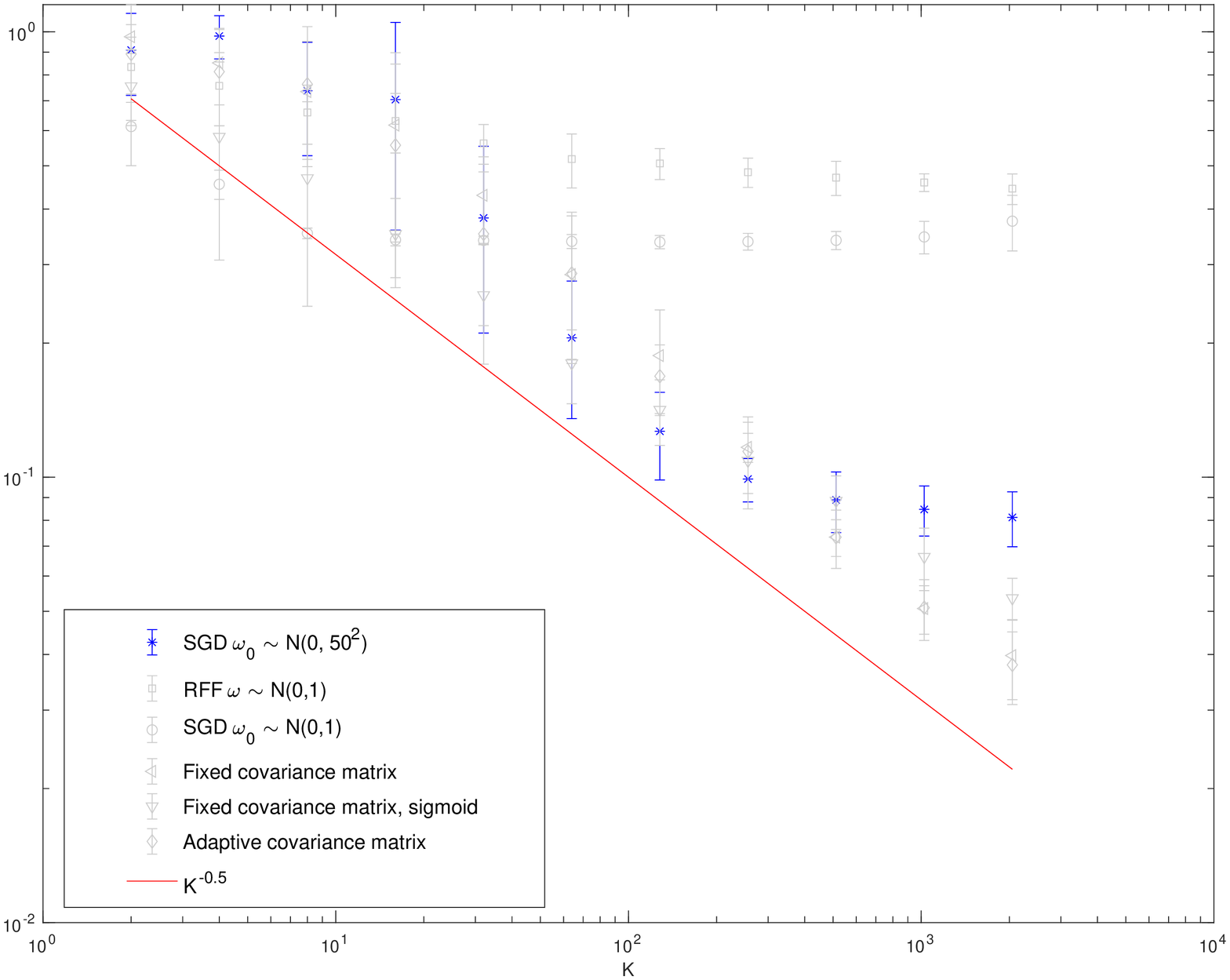}
\caption{Experiment 4, Stochastic gradient method with a large variance on initial components of $\BFO$}
\label{fig:almost_disc_t1}
\end{subfigure}
\begin{subfigure}{0.49\textwidth}
\centering
\includegraphics[width=0.95\linewidth]{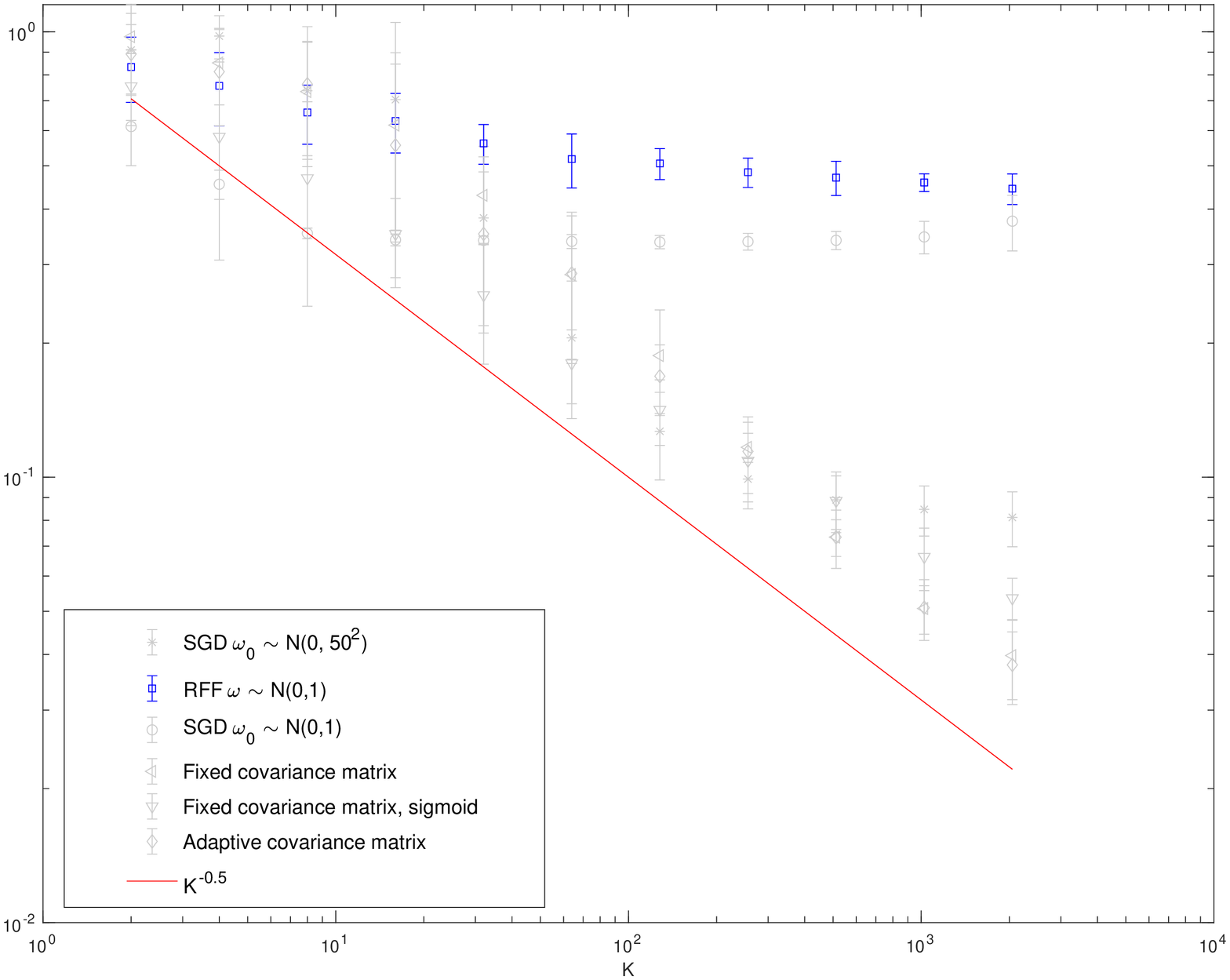}
\caption{Experiment 3, Random Fourier Features}
\label{fig:almost_disc_t2}
\end{subfigure}

\begin{subfigure}{0.49\textwidth}
\centering
\includegraphics[width=0.95\linewidth]{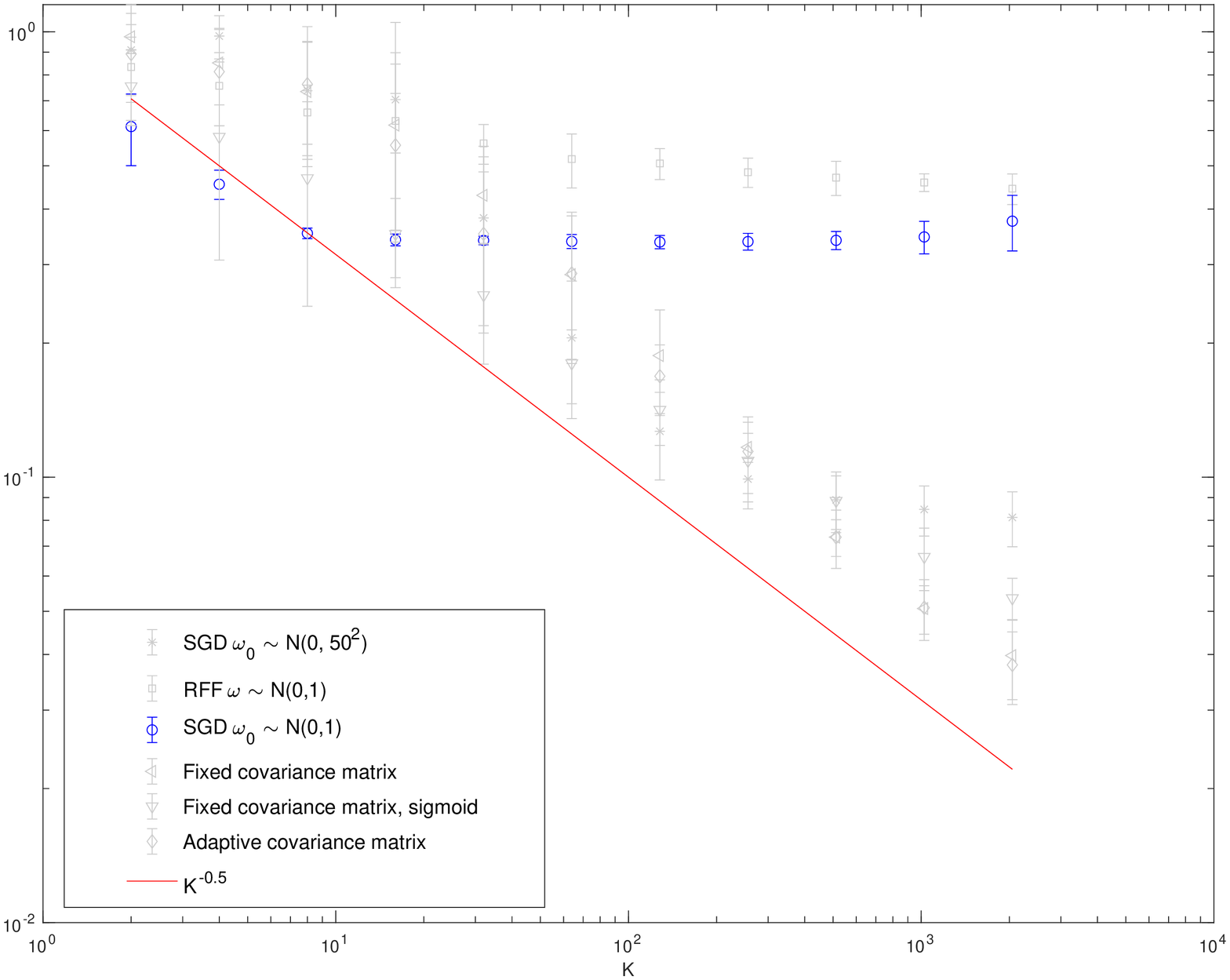}
\caption{Experiment 4, Stochastic gradient method with initial components of $\BFO$ from $\mathcal{N}(0,1)$}
\label{fig:almost_disc_t3}
\end{subfigure}
\begin{subfigure}{0.49\textwidth}
\centering
\includegraphics[width=0.95\linewidth]{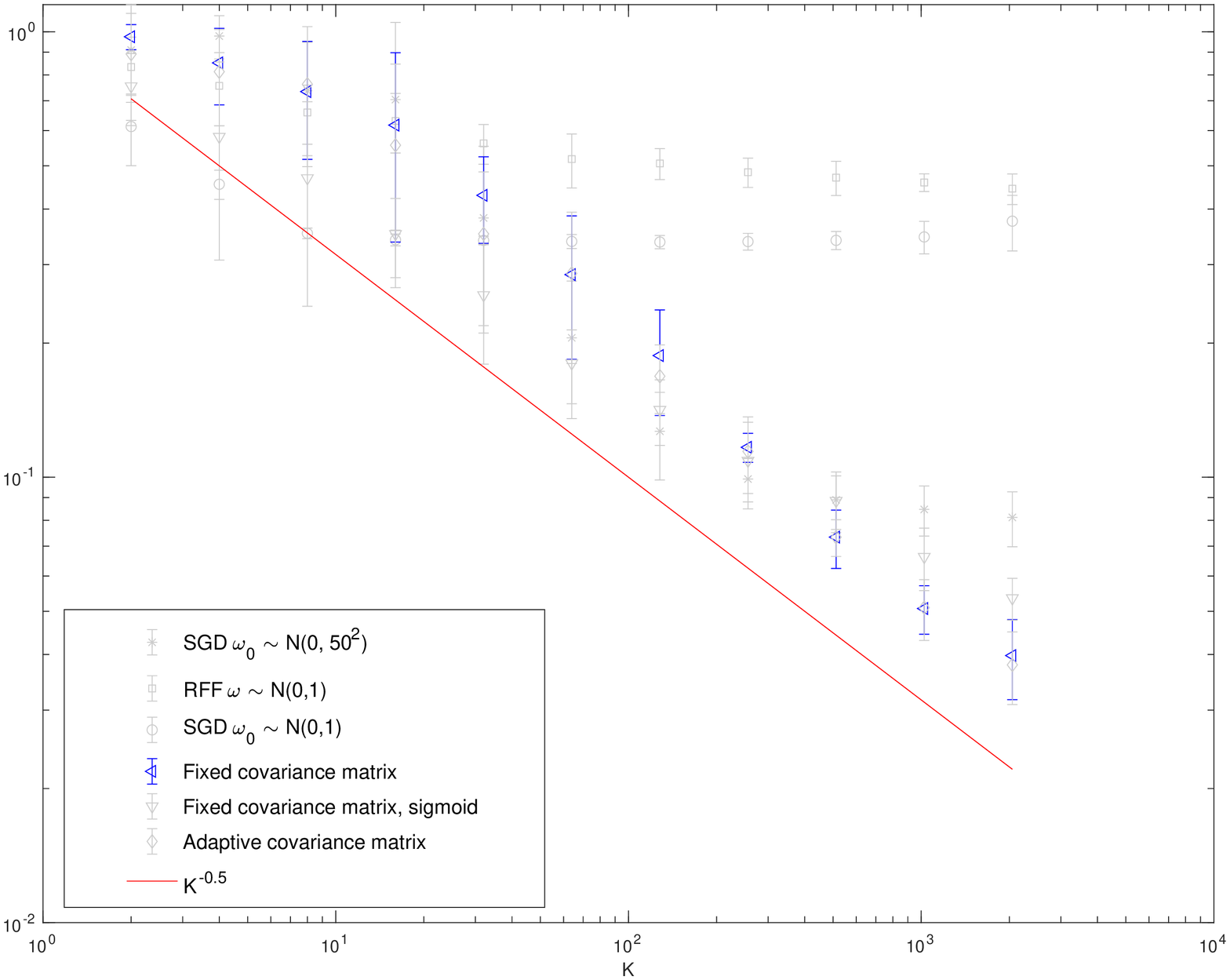}
\caption{Experiment 1, Adaptive Metropolis sampling}
\label{fig:almost_disc_t4}
\end{subfigure}

\begin{subfigure}{0.49\textwidth}
\centering
\includegraphics[width=0.95\linewidth]{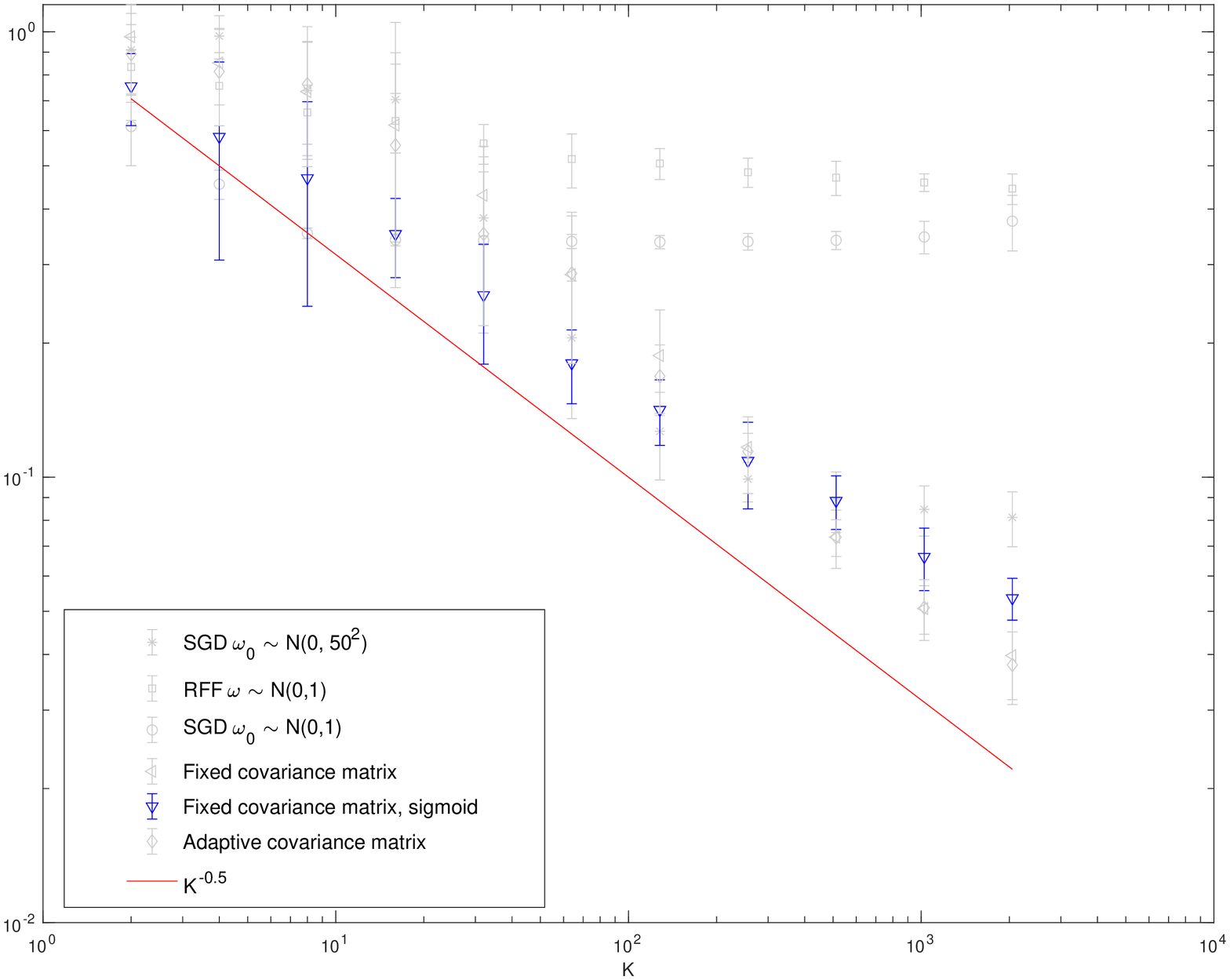}
\caption{Experiment 5, Adaptive Metropolis sampling with the sigmoid activation function}
\label{fig:almost_disc_t5}
\end{subfigure}
\begin{subfigure}{0.49\textwidth}
\centering
\includegraphics[width=0.95\linewidth]{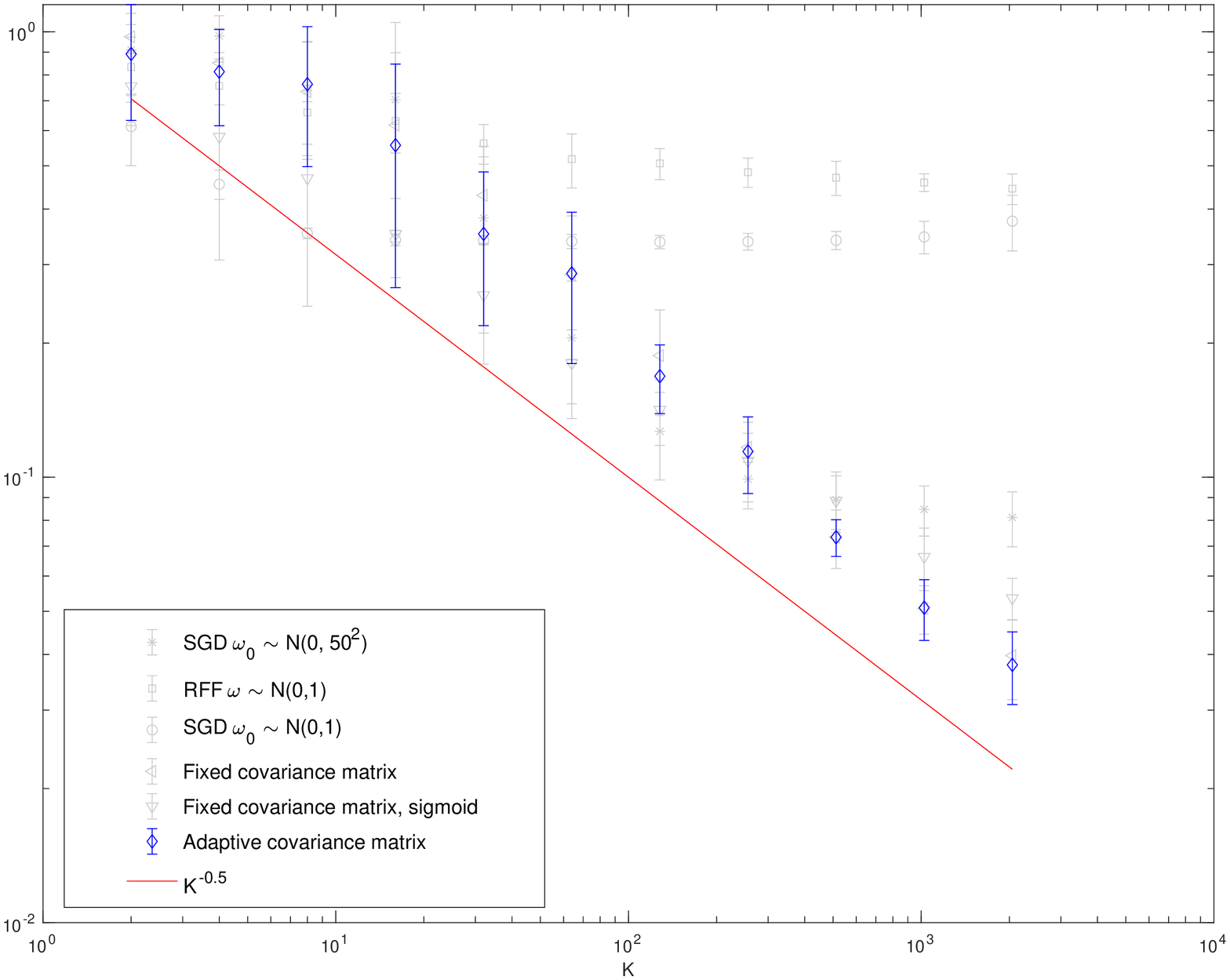}
\caption{Experiment 2, Adaptive Metropolis sampling with adaptive covariance matrix}
\label{fig:almost_disc_t6}
\end{subfigure}
\caption{Case 1: The same data are shown in all figures with each of the six different experiments highlighted (blue on-line).}\label{fig:almost_disc}
\end{figure}

\medskip\noindent
{\it Case 2: A high dimensional target function.}
The purpose of this case is to test the ability of Algorithm~\ref{alg:ARFM} to train a neural network in a higher dimension. Therefore we set $d = 5$. The data is generated analogously to how it is generated in Case 1 but we now use the target function $f:\rset^5\to \rset$
\begin{equation*}
    f(x) = \text{Si}\left(\frac{x_1}{a}\right)e^{-\frac{|x|^2}{2}}
\end{equation*}
where $a = 10^{-1}$. We run Experiment 1, 3 and 4 and the resulting convergence plot with respect to the number of frequencies $K$ is presented in Figure \ref{fig:hd_ge}.

In Figure \ref{fig:hd_ge} we can see the expected convergence rate of $\mathcal{O}(K^{-1/2})$ for Algorithm~\ref{alg:ARFM}. The Stochastic gradient method gives an error that converges as $\mathcal{O}(K^{-1/2})$ for smaller values of $K$ but stops improving for approximately $K>128$ for the chosen number of iterations. 

\begin{figure}[ht]
\centering
\includegraphics[width=0.99\textwidth]{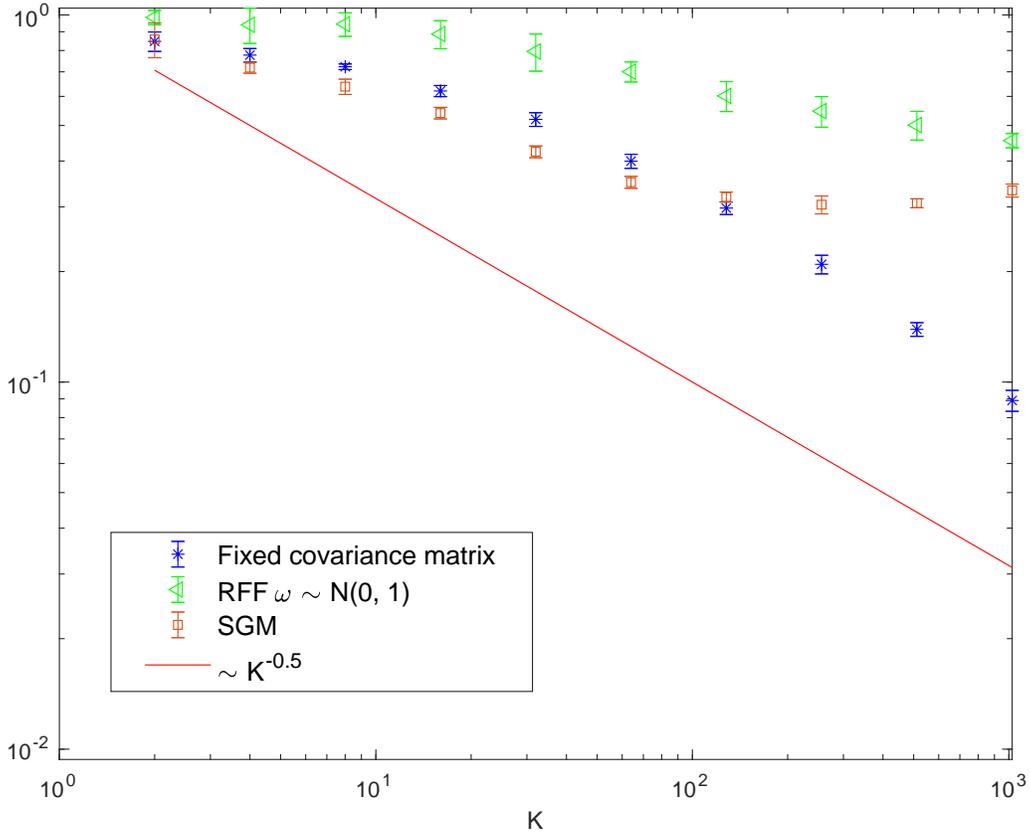}
\caption{Case 2: The figure illustrates the generalization error with respect to $K$ for a target function in dimension $d = 5$.}\label{fig:hd_ge}
\end{figure}

\medskip\noindent
{\it Case 3: Anisotropic Gaussian target function.} 
Now we consider the target function
\begin{equation}\label{eq:elliptic_target_fcn}
    f(x) = e^{-(32 x_1)^2/2}e^{-(32^{-1} x_2)^2/2}\,,
\end{equation}
which, as well as $\hat{f}$, has elliptically shaped level surfaces. To find the optimal distribution $p_*$ which is
$\mathcal{N}(\boldsymbol{0}, \mathrm{diag}([32^{-2}, 32^2]))$ thus requires to find the covariance matrix $\text{diag}([32^{-2}, 32^2])$.

The generation of data is done as in Case 1 except that the non normalized $x$-points are independent random vectors in $\mathbb{R}^2$ with independent $\mathcal{N}(0,1)$ components. We fix the number of nodes in the neural network to $K = 256$ and compute an approximate solution to the problem \eqref{eq:num_normal_eq} by running Experiment 1, 2 and 4.

Convergence of the generalization error with respect to time is presented in Figure \ref{fig:eta_comparisons} where we note that both Algorithm~\ref{alg:ARFM} and Algorithm~\ref{alg:ARFM_acov} produce faster convergence than the stochastic gradient descent. For the stochastic gradient method the learning rate has been tuned for a benefit of the convergence rate in the generalization error but the initial distribution of the components of $\BFO$ simply chosen as the standard normal distribution.

\begin{figure}[ht]
\centering
\includegraphics[width=0.99\textwidth]{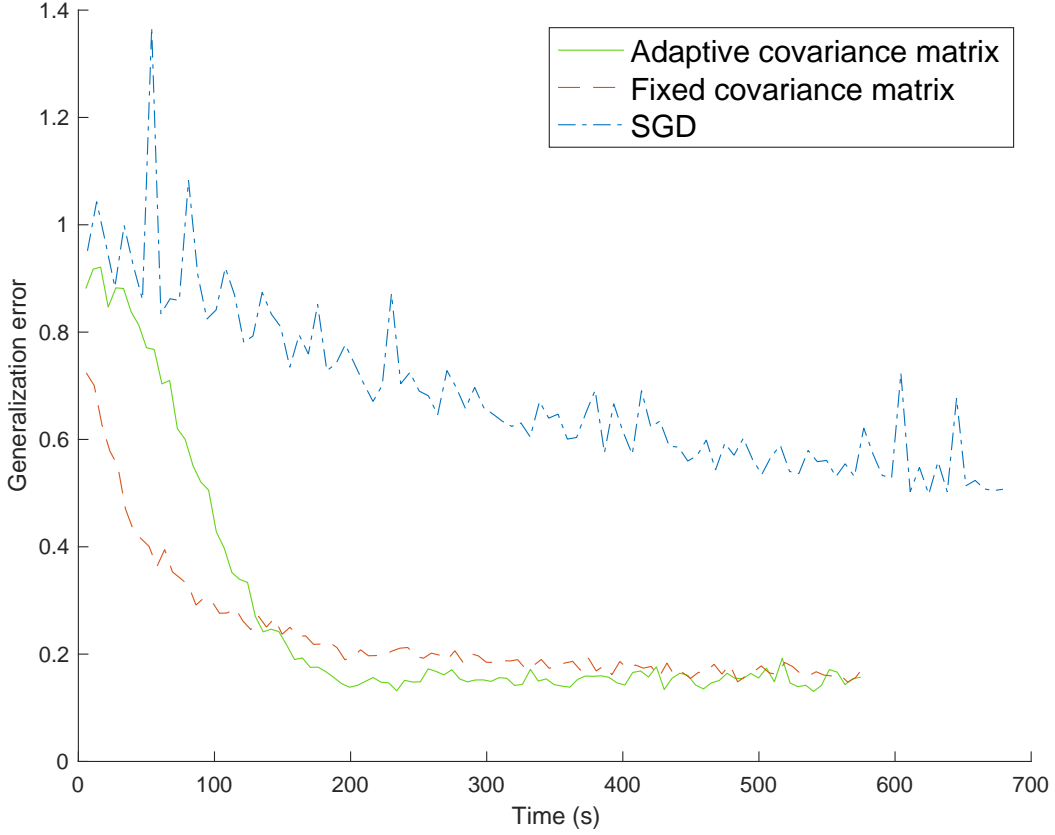}
\caption{Case 3: The figure illustrates the generalization error over time when the approximate problem solution is computed using Algorithm 1, Algorithm 2 and stochastic gradient descent.}\label{fig:eta_comparisons}
\end{figure}

\medskip\noindent
{\it Case 4: The MNIST data set.}
The previously presented numerical tests have dealt with problems of a pure regression character. In contrast, we now turn our focus to a classification problem of handwritten digits.

The MNIST data set consists of a training set of $60000$ handwritten digits with corresponding labels and a test set of $10000$ handwritten digits with corresponding labels.

We consider the ten least squares problems
\begin{equation}\label{eq:mnist_num_disc_prob}
    \min_{\boldsymbol{\hat{\beta}}^i\in \mathbb{C}^{K}}\left(N^{-1}|\BARS\boldsymbol{\hat{\beta}}^i-\mathbf y^i|^2 + \lambda|\boldsymbol{\hat{\beta}}^i|^2\right), \; i = 0,1,\dots,9\,,
\end{equation}
where $\BARS\in\mathbb{C}^{N\times K}$ is the matrix with elements $\BARS_{n,k} = e^{\mathrm{i}\omega_k\cdot x_n}$, $n = 1,...,N$, $k = 1,...,K$ and 
$\mathbf{y}^i=(y_1^i,\ldots,y_N^i)\in \mathbb{R}^N$. 
The training data $\{(x_n; (y_n^0, y_n^1,...,y_n^9))\}_{n=1}^N$ consist of handwritten digits $x_n\in \mathbb{R}^{784}$ with corresponding vector labels $(y_n^0, y_n^1,...,y_n^9)$. 
Each vector label $(y_n^0, y_n^1,...,y_n^9)$ has one component equal to one and the other components equal to zero. The index $i$ of the component $y_n^i$ that is equal to $1$ is the number that the handwritten digit $x_n$ represents. The problems \eqref{eq:mnist_num_disc_prob} 
have the corresponding linear normal equations
\begin{equation}\label{eq:num_normal_eq_mnist}
    (\BARS^T\BARS+\lambda N \mathbf{I})
    \boldsymbol{\hat{\beta}}^i = \BARS^T\mathbf y^i\,, \;\;\; i = 0,1,...,9\,,
\end{equation}
which we solve using the $\texttt{MATLAB}$
backslash operator for each $i = 0,1,...,9$.
The regularizing parameter $\lambda$ acts as a regulator to adjust the bias-variance trade-off.

We compute an approximate solution to the problem \eqref{eq:mnist_num_disc_prob} by using 
Algorithm~\ref{alg:ARFM} but in the Metropolis test step evaluate 
$||(\hat{\beta}^0_k, \hat{\beta}^1_k,...,\hat{\beta}^9_k)'||_2^\ALPHAEX/||(\hat{\beta}^0_k, \hat{\beta}^1_k,...,\hat{\beta}^9_k)||_2^\ALPHAEX>r_{\mathcal{U}}$ where $||\cdot||_{2}$ denotes the Euclidean norm $||\hat{\beta}_k||_{2} = \sqrt{\sum_{i = 0}^9 (\hat{\beta}_k^i)^2}$.

We evaluate the trained artificial neural network
for each handwritten test digit 
$\tilde{x}_n, \; n=1,2,...,\tilde{N}$ and classify the handwritten test digit as the number
\begin{equation*}
    \argmax_i \{|\sum_{k=1}^{K}\hat{\beta}_k^{i}s(\omega_k\cdot \Tilde{x}_n)|\}_{i=0}^9\,, 
\end{equation*}
where $\{\omega_k; \hat{\beta}_k^{0}, \hat{\beta}_k^{1},...,\hat{\beta}_k^{9}\}_{k=1}^K$ are the trained frequencies and amplitudes resulting from running Algorithm~\ref{alg:ARFM}.

As a comparison we also use frequencies from the standard normal distribution and from the normal distribution $\mathcal{N}(0, 0.1^2)$ but otherwise solve the problem the same way as described in this case.

The error is computed as the percentage of misclassified digits over the test set $\{(\Tilde{x}_n; (\Tilde{y}_n^0, \Tilde{y}_n^1,...,\Tilde{y}_n^9))\}_{n=1}^{\tilde{N}}$. We present the results in 
Table~\ref{table:MNIST_percent_mistaken} and 
Figure~\ref{fig:mnist_ge} where we note that the smallest error is achieved when frequencies are sampled by using Algorithm~\ref{alg:ARFM}. When sampling frequencies from the standard normal distribution, i.e, $\mathcal{N}(0, 1)$, we do not observe any convergence with respect to $K$.

\begin{table}[ht]
\begin{tabular}{l|l|l|l|l|l}
\cline{2-5}
 & K    & Fixed  & Fixed & Adaptive  &  \\ 
 &      & $\omega\sim\mathcal{N}(0, 1)$ & $\omega\sim\mathcal{N}(0, 0.1^2)$& &\\ \cline{2-5}
 & 2    & 89.97\% & 81.65\% & 80.03\%  &  \\ \cline{2-5}
 & 4    & 89.2\% & 70.65\% & 65.25\%  &  \\ \cline{2-5}
 & 8    & 88.98\% & 55.35\% & 53.44\%  &  \\ \cline{2-5}
 & 16   & 88.69\% & 46.77\% & 36.42\%  &  \\ \cline{2-5}
 & 32   & 88.9\% & 30.43\% & 23.52\%  &  \\ \cline{2-5}
 & 64   & 88.59\% & 19.73\% & 16.98\%  &  \\ \cline{2-5}
 & 128  & 88.7\% & 13.6\% & 11.13\%  &  \\ \cline{2-5}
 & 256  & 88.09\% & 10.12\% & 7.99\%  &  \\ \cline{2-5}
 & 512  & 88.01\% & 8.16\% & 5.93\%  &  \\ \cline{2-5}
 & 1024 & 87.34\% & 6.29\% & 4.57\%  &  \\ \cline{2-5}
 & 2048 & 86.5\% & 4.94\% & 3.5\%  &  \\ \cline{2-5}
 & 4096 & 85.21\% & 3.76\% & 2.74\%  &  \\ \cline{2-5}
 & 8192 & 83.98\% & 3.16\% & 1.98\%  &  \\ \cline{2-5}
\end{tabular}
\caption{Case 4: The table shows the percentage of misclassified digits 
in the MNIST test data set for different values of $K$. Comparison between adaptively computed distribution of frequencies $\omega_k$ and sampling a fixed (normal) distribution.}\label{table:MNIST_percent_mistaken}
\end{table}

\begin{figure}[ht]
\centering
\includegraphics[width=0.99\textwidth]{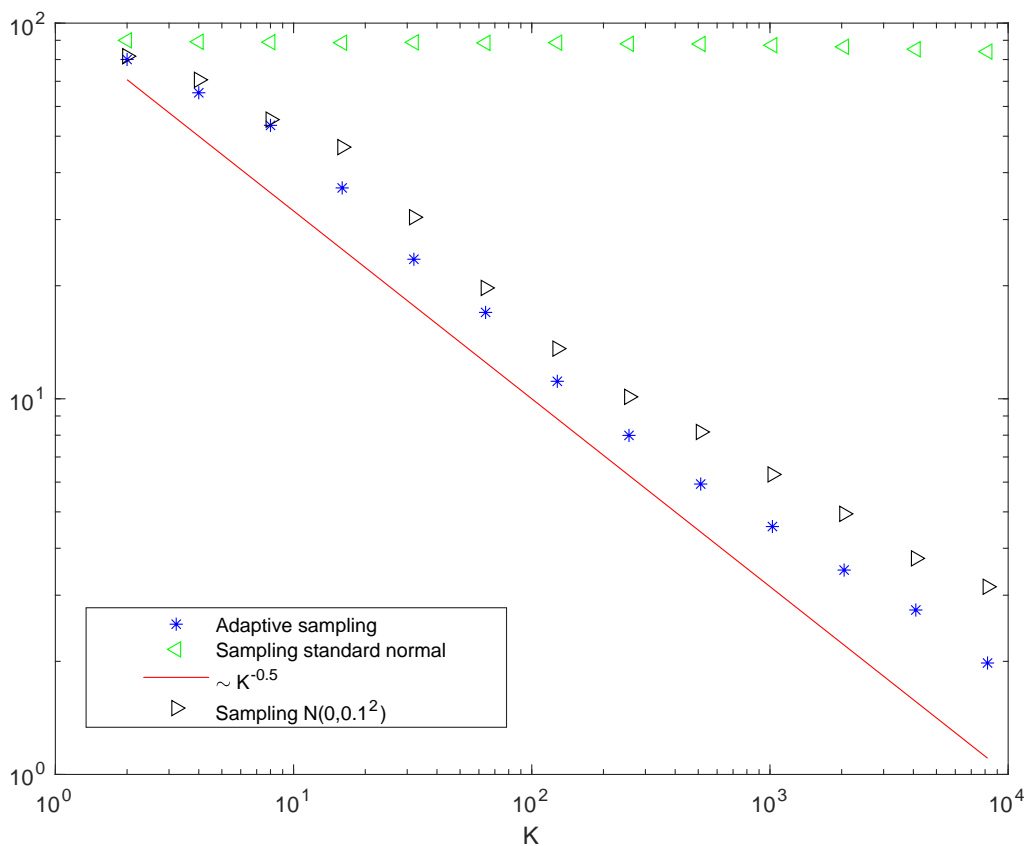}
\caption{Case 4: Dependence on $K$ of the misclassification percentage in the MNIST.}\label{fig:mnist_ge}
\end{figure}

\begin{table}[ht]%
\tiny{
\bgroup
\def\arraystretch{1.2}
\begin{tabular}{|l|l|l|l|l|l|l|l|l|l|l|l|l|}
\hline
                                                        & \multicolumn{10}{l|}{\textbf{Regression}}                                                                                                                                                                                                                                                                                                                                                                                                        & \multicolumn{2}{l|}{\textbf{Classification}}                                                                                   \\ \hline
Case                                                    & \multicolumn{5}{l|}{\begin{tabular}[c]{@{}l@{}}Case 1:\\ a regularised step function\end{tabular}}                                                                                   & \multicolumn{2}{l|}{\begin{tabular}[c]{@{}l@{}}Case 2:\\ a high dimensional\\ function\end{tabular}}                                             & \multicolumn{3}{l|}{\begin{tabular}[c]{@{}l@{}}Case 3:\\ anisotropic Gaussian\\ function\end{tabular}} & \multicolumn{2}{l|}{\begin{tabular}[c]{@{}l@{}}Case 4:\\ The MNIST\\ data set\end{tabular}}                                    \\ \hline
Purpose                                                 & \multicolumn{5}{l|}{\begin{tabular}[c]{@{}l@{}}Find $p$ such that the constant\\ $\E_\omega[\frac{|\hat f(\omega)|^2}{(2\pi)^{d}p^2(\omega)}]$\\ does not become large\end{tabular}} & \multicolumn{2}{l|}{\begin{tabular}[c]{@{}l@{}}Study the ability\\ of Alg. 1 to find a\\ high dimensional\\ dimensional\\ function\end{tabular}} & \multicolumn{3}{l|}{\begin{tabular}[c]{@{}l@{}}Computational\\ complexity comparison\end{tabular}}     & \multicolumn{2}{l|}{\begin{tabular}[c]{@{}l@{}}Study the ability\\ of Alg. 1 to work\\ with non\\ synthetic data\end{tabular}} \\ \hline
\begin{tabular}[c]{@{}l@{}}Target\\ $f(x)$\end{tabular} & \multicolumn{5}{l|}{\begin{tabular}[c]{@{}l@{}}$\text{Si}\left(\frac{x}{a}\right)e^{-\frac{x^2}{2}}$\\ $a = 10^{-3}$\end{tabular}}                                                   & \multicolumn{2}{l|}{\begin{tabular}[c]{@{}l@{}}$\text{Si}\left(\frac{x_1}{a}\right)e^{-\frac{|x|^2}{2}}$\\ $a = 10^{-1}$\end{tabular}}           & \multicolumn{3}{l|}{$e^{-(32 x_1)^2/2}e^{-(32^{-1} x_2)^2/2}$}                                         & \multicolumn{2}{l|}{}                                                                                                          \\ \hline
$d$                                                     & \multicolumn{5}{l|}{$1$}                                                                                                                                                             & \multicolumn{2}{l|}{$5$}                                                                                                                         & \multicolumn{3}{l|}{$2$}                                                                               & \multicolumn{2}{l|}{$784$}                                                                                                     \\ \hline
$K$                                                     & \multicolumn{5}{l|}{$2^i, \, i = 1,2,...,11$}                                                                                                                                        & \multicolumn{2}{l|}{$2^i, \, i = 1,2,...,10$}                                                                                                    & \multicolumn{3}{l|}{$256$}                                                                             & \multicolumn{2}{l|}{$2^i, \, i = 1,2,...,13$}                                                                                  \\ \hline
Experiment                                              & Exp. 1                  & Exp. 2                 & Exp. 3                  & Exp. 4                          & Exp. 5                                                                & Exp. 1                                                                   & Exp. 4                                                                & Exp. 1                       & Exp. 2                         & Exp. 4                                 & Exp. 1                                                         & Exp. 3                                                        \\ \hline
Method                                                  & Alg. 1                  & Alg. 2                 & RFF$^{1}$               & SGM                             & \begin{tabular}[c]{@{}l@{}}Alg. 1\\ sigmoid\end{tabular}              & Alg. 1                                                                   & SGM                                                                   & Alg. 1                       & Alg. 2                         & SGM                                    & Alg. 1                                                         & RFF$^{2}$                                                     \\ \hline
$N$                                                     & $10^4$                  & $10^4$                 & $10^4$                  & $10^4$                          & $10^4$                                                                & $10^4$                                                                   & $10^4$                                                                & $10^4$                       & $10^4$                         & $3\times 10^7$                         & $6\times 10^4$                                                 & $6 \times 10^4$                                               \\ \hline
$\tilde{N}$                                             & $10^4$                  & $10^4$                 & $10^4$                  & $10^4$                          & $10^4$                                                                & $10^4$                                                                   & $10^4$                                                                & $10^4$                       & $10^4$                         & $10^4$                                 & $10^4$                                                         & $10^4$                                                        \\ \hline
$\ALPHAEX$                                              & $3d-2$                  & $3d-2$                 &                         &                                 & $3d-2$                                                                & $3d-2$                                                                   &                                                                       & $3d-2$                       & $3d-2$                         &                                        & $3d-2$                                                         &                                                               \\ \hline
$\lambda$                                               & $0.1$                   & $0.1$                  & $0.1$                   & $0$                             & $0.1$                                                                 & $0.1$                                                                    & 0                                                                     & $0.1$                        & $0.1$                          & $0$                                    & $0.1$                                                          & $0.1$                                                         \\ \hline
$M$                                                     & $10^3$                  & $5000$                 & N/A                     & $10^7$                          & $10^4$                                                                & $2.5\times 10^3$                                                         & $10^7$                                                                & $10^4$                       & $10^4$                         & $3\times 10^7$                         & $10^2$                                                         &                                                               \\ \hline
$\bar{M}$                                               & $10$                    & $10$                   & $10$                    & $10$                            & $10$                                                                  & $10$                                                                     & $10$                                                                  & $1$                          & $1$                            & $1$                                    & $1$                                                            & $1$                                                           \\ \hline
$\delta$                                                & $2.4^2/d$               & $0.1$                  &                         &                                 & $2.4^2/d$                                                             & $\frac{2.4^2}{10d}$                                                      &                                                                       & $0.5$                        & $0.1$                          &                                        & $0.1$                                                          &                                                               \\ \hline
$\Delta t$                                              &                         &                        &                         & $\mathtt{1.5e-4}$               &                                                                       &                                                                          & $\mathtt{3.0e-4}$                                                     &                              &                                & $\mathtt{1.5e-3}$                      &                                                                &                                                               \\ \hline
$t_0$                                                   &                         & $M/10$                 &                         &                                 &                                                                       &                                                                          &                                                                       &                              & $M/10$                         &                                        &                                                                &                                                               \\ \hline
$\BFO_{\text{max}}$                                     &                         & $\infty$               &                         &                                 &                                                                       &                                                                          &                                                                       &                              & $\infty$                       &                                        &                                                                &                                                               \\ \hline
$m$                                                     & $10$                    & $50$                   &                         &                                 & $100$                                                                 & $25$                                                                     &                                                                       & $100$                        & $100$                          &                                        & $M+1$                                                          &                                                               \\ \hline
\end{tabular}
\egroup
\caption{Summary of numerical experiments together with their corresponding parameter choices. \\
$^1$ -- Random Fourier features with frequencies sampled from the fixed distribution $\mathcal{N}(0,1)$\\
$^2$ -- Random Fourier features with frequencies sampled from the fixed distribution $\mathcal{N}(0,1)$, or 
$\mathcal{N}(0,0.1^2)$ }
\label{table:parameter_comparison}
}
\end{table}

\subsection{Optimal distribution $p_*$} 
In this numerical test we demonstrate how the average generalization error depends on the distribution $p$ for a particular choice of data distribution.
We recall the frequencies $\BFO = (\omega_1, \ldots, \omega_K)$ with $(\omega_k)$ i.i.d. from the distribution $p$. %
In the experiment the data ${(x_n, y_n)}_{n=1}^N$ are given by $y_n = e^{-|x_n|^2/2} + \epsilon_n$. We let the components $\omega_k$ be sampled independently from $\mathcal{N}(0,\sigma_{\omega}^2)$ and monitor the results for different values of $\sigma_{\omega}$. Note that Algorithm~\ref{alg:ARFM} would approximately sample $\BFO$ from the optimal density $p_*$, which in this case correspond to the standard normal, i.e., $\omega_k\sim\mathcal{N}(0,1)$, see \eqref{eq:optimal_density} .

In the simulations we choose $x_n$ from $\mathcal{N}(0,1)$, $\epsilon_n$ from $\mathcal{N}\big(0,0.1^2\big)$, $d = 7$, $K = 500$, $N = 10^5$ and $\lambda = 0.01$. The error bars are estimated by generating $\bar M=10$ independent realizations for each choice of $\sigma_{\omega}$. 

The results are depicted
in Figure~\ref{fig:mean_sqr_vs_omega_vol} where we observe that the generalization error is minimized for 
$\sigma_{\omega} \approx 1$ which is in an agreement 
with the theoretical optimum.

\begin{figure}[ht]
\centering
\includegraphics[width=0.99\textwidth]{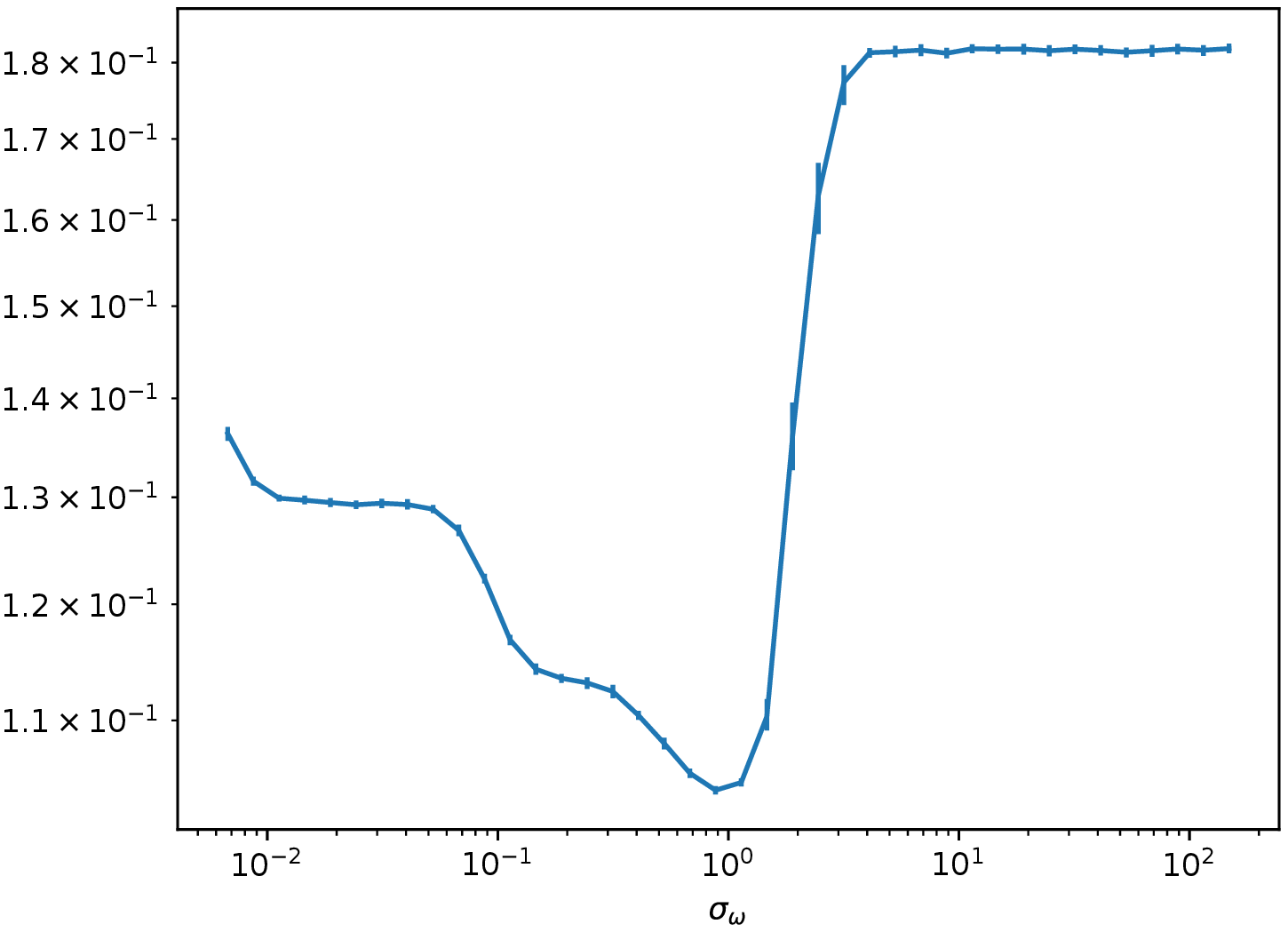}
\caption{The generalization error as a function of the standard deviation $\sigma_\omega$ of $p({\omega})$.}\label{fig:mean_sqr_vs_omega_vol}
\end{figure}

\subsection{Computing infrastructure}
The numerical experiments are computed on a desktop with an \texttt{Intel Core i9-9900K CPU @ 3.60GHz} and \texttt{32 GiB} of memory running \texttt{Matlab R2019a} under
\texttt{Windows 10 Home}.

\bibliography{references}

\begin{thebibliography}{10}

\bibitem{pmlr-v70-avron17a}
Haim Avron, Michael Kapralov, Cameron Musco, Christopher Musco, Ameya
  Velingker, and Amir Zandieh.
\newblock Random {F}ourier features for kernel ridge regression: Approximation
  bounds and statistical guarantees.
\newblock In Doina Precup and Yee~Whye Teh, editors, {\em Proceedings of the
  34th International Conference on Machine Learning}, volume~70 of {\em
  Proceedings of Machine Learning Research}, pages 253--262, International
  Convention Centre, Sydney, Australia, 06--11 Aug 2017. PMLR.

\bibitem{babuska}
I.~Babu\v{s}ka and W.~C. Rheinboldt.
\newblock Error estimates for adaptive finite element computations.
\newblock {\em SIAM J. Numer. Anal.}, 15(4):736--754, 1978.

\bibitem{bach}
Francis Bach.
\newblock On the equivalence between kernel quadrature rules and random feature
  expansions.
\newblock {\em J. Mach. Learn. Res.}, 18:Paper No. 21, 38, 2017.

\bibitem{barron}
Andrew~R. Barron.
\newblock Universal approximation bounds for superpositions of a sigmoidal
  function.
\newblock {\em IEEE Trans. Inform. Theory}, 39(3):930--945, 1993.

\bibitem{weinan_2}
Weinan E, Chao Ma, and Lei Wu.
\newblock A comparative analysis of optimization and generalization properties
  of two-layer neural network and random feature models under gradient descent
  dynamics.
\newblock {\em Science China Mathematics}, Jan 2020.

\bibitem{evans}
Lawrence~C. Evans.
\newblock {\em Partial differential equations}, volume~19 of {\em Graduate
  Studies in Mathematics}.
\newblock American Mathematical Society, Providence, RI, second edition, 2010.

\bibitem{haario2001}
Heikki Haario, Eero Saksman, and Johanna Tamminen.
\newblock An adaptive {M}etropolis algorithm.
\newblock {\em Bernoulli}, 7(2):223--242, 04 2001.

\bibitem{jones}
Lee~K. Jones.
\newblock A simple lemma on greedy approximation in {H}ilbert space and
  convergence rates for projection pursuit regression and neural network
  training.
\newblock {\em Ann. Statist.}, 20(1):608--613, 1992.

\bibitem{Li2019TowardsAU}
Zhu Li, Jean-Francois Ton, Dino Oglic, and Dino Sejdinovic.
\newblock Towards a unified analysis of random {F}ourier features.
\newblock In {\em ICML}, 2019.

\bibitem{rahimi_recht}
Ali Rahimi and Benjamin Recht.
\newblock Random features for large-scale kernel machines.
\newblock In J.~C. Platt, D.~Koller, Y.~Singer, and S.~T. Roweis, editors, {\em
  Advances in Neural Information Processing Systems 20}, pages 1177--1184.
  Curran Associates, Inc., 2008.

\bibitem{roberts_1997}
G.~O. Roberts, A.~Gelman, and W.~R. Gilks.
\newblock Weak convergence and optimal scaling of random walk {M}etropolis
  algorithms.
\newblock {\em Ann. Appl. Probab.}, 7(1):110--120, 1997.

\bibitem{roberts2001}
Gareth~O. Roberts and Jeffrey~S. Rosenthal.
\newblock Optimal scaling for various metropolis-hastings algorithms.
\newblock {\em Statist. Sci.}, 16(4):351--367, 11 2001.

\bibitem{roberts_examples}
Gareth~O. Roberts and Jeffrey~S. Rosenthal.
\newblock Examples of adaptive mcmc.
\newblock {\em Journal of Computational and Graphical Statistics},
  18(2):349--367, 2009.

\bibitem{rudi}
Alessandro Rudi and Lorenzo Rosasco.
\newblock Generalization properties of learning with random features.
\newblock In I.~Guyon, U.~V. Luxburg, S.~Bengio, H.~Wallach, R.~Fergus,
  S.~Vishwanathan, and R.~Garnett, editors, {\em Advances in Neural Information
  Processing Systems 30}, pages 3215--3225. Curran Associates, Inc., 2017.

\bibitem{understand}
Shai Shalev-Shwartz and Shai Ben-David.
\newblock {\em Understanding Machine Learning: From Theory to Algorithms}.
\newblock Cambridge University Press, 2014.

\bibitem{trefethen_bau_1997}
Lloyd~N. Trefethen and David Bau, III.
\newblock {\em Numerical linear algebra}.
\newblock Society for Industrial and Applied Mathematics (SIAM), Philadelphia,
  PA, 1997.

\bibitem{Wilson2013}
Andrew~Gordon Wilson and Ryan~P. Adams.
\newblock Gaussian process kernels for pattern discovery and extrapolation.
\newblock In {\em ICML}, 2013.

\end{thebibliography}
\bibliographystyle{plain}

\end{document}